\newtheorem{thm}{Theorem}[subsection]
\newtheorem{prop}[thm]{Proposition}
\newtheorem{rem}[thm]{Remark}
\newcommand{\npmod}[1]{\!\!\pmod{#1}}
\newenvironment{proof}{\par\noindent{\bf[Proof]}}%
                      {$\blacksquare$\noindent\par\vspace{0.5\baselineskip}}
                      {$\blacksquare$\par\noindent}
\font\b=cmr10 scaled \magstep4
\def\bigzerou{\smash{\lower0.7ex\hbox{\b 0}}}
\def\bigastl{\smash{\lower0.7ex\hbox{\b *}}}
\def\bigastu{\smash{\lower2.7ex\hbox{\b *}}}
\def\addots{\mathinner
    {\mkern1mu\raise1pt\hbox{.}\mkern2mu
    \raise4pt\hbox{.}\mkern2mu\raise7pt\vbox{\kern7pt\hbox{.}}\mkern1mu}}
\renewcommand{\subsection}{\@startsection%
  {subsection}%
  {2}%
  {0mm}%
  {\baselineskip}%
  {-0.2\parindent}%
  {\normalfont\normalsize\upshape\bfseries}}%
\def\@dotsep{1.5}
\def\@pnumwidth{1em}
\title{On supercuspidal representations of $SL_n(F)$ associated with
  tamely ramified extensions} 
\author{Koichi Takase
        \thanks{The author is partially supported by 
                    JSPS KAKENHI Grant Number JP 16K05053}}
\date{}
\begin{document}   


%
%

\maketitle

\begin{abstract}
We will give an explicit construction of irreducible suparcuspidal
representations of the special linear group over a non-archimedean
local field  and will speculate its Langlands
parameter by means of verifying the Hiraga-Ichino-Ikeda formula of
the formal degree of the supercuspidal representations.

\end{abstract}

\section{Introduction}
\label{sec:introduction}

Although the $L$-packets of the
irreducible supercuspidal representations of the special linear group
over a non-archimedean local field  are well-understood
by the works of \cite{LabesseLanglands1979}, \cite{GelbartKnapp1982}, 
\cite{MoySally1984} or of \cite{HiragaSaito2012}, 
little is known for the explicit determination of
the Langlands parameter of an individual explicitly constructed
supercuspidal representation. 

In this paper, we will construct quite
explicitely some supercuspidal representations, 
associated with tamely ramified extensions, 
of the special linear
group over a non-dyadic non-archimedean local field and will speculate its 
Langlands parameter by showing the formula of the formal degree
established by Hiraga, Ichino and Ikeda \cite{HiragaSaito2012} 
(see the subsection \ref{subsec:concluding-remark} for the
conclusions). 

The main results of this paper are Theorem 
\ref{th:construction-of-supercuspidal-representation} (a 
construction of supercuspidal representations associated with a tamely
ramified extension of the base field), 
Theorem 
\ref{th:formal-degree-wrt-euler-poincare-measur} (an explicit formula
of the formal degree of the supercuspidal representation) 
and Theorem 
\ref{th:ratio-of-gamma-factor} (showing Hiraga-Ichino-Ikeda formula of
the formal degree in the form of Gross and Reeder
\cite{GrossReeder2010}).

The first theorem is proved in Section 
\ref{sec:construction-of-supercuspidal-representation}. Basic ideas
and arguments are these of Shintani \cite{Shintani1968} with a small
modification to our case of the special linear group (the original
Shintani's paper treats the subgroup of the general linear group of
unit determinant). 

The second theorem is proved in Section 
\ref{sec:formal-degree-of-supercuspidal-representation}. 
Our argument is based upon a general theory,
developed by \cite{Takase2019}, of explicit description of irreducible
representations of hyperspecial open compact subgroups associated with
regular adjoint orbits. 

The third theorem is proved in Section 
\ref{sec:induced-representation-of-weil-group} under the assumption
that the tamely ramified extension is Galois extension. 

\section{Results}
\label{sec:results}

\subsection[]{}
Let $F$ be a non-dyadic non-archimedean local field. The integer ring
of $F$ is 
denoted by $O_F$ with the maximal ideal $\frak{p}=\frak{p}_F$
generated by $\varpi=\varpi_F$. The residue class field 
$\Bbb F=O_F/\frak{p}$ is a finite field of $q$ elements. Fix a
continuous unitary additive character $\tau:F\to\Bbb C^{\times}$ such
that
$$
 \{x\in F\mid\tau(xO_F)=1\}=O_F.
$$
Then 
$\widehat\tau(\overline x)=\tau\left(\varpi^{-1}x\right)$ 
($x\in O_F$) gives a non-trivial unitary additive character 
$\widehat\tau:\Bbb F\to\Bbb C^{\times}$.

The special linear group $G=SL_n$ of degree $n$ is a smooth connected
semisimple group scheme over $O_F$ whose Lie algebra is denoted by
$\frak{g}=\frak{sl}_n$. For any commutative $O_F$-algebra $A$, the
group and the $A$-Lie algebra of the $A$-valued points are
$$
 G(A)=\{g\in M_n(A)\mid \det g=1\}
$$
and
$$
 \frak{g}(A)=\{X\in M_n(A)\mid\text{\rm tr}\,X=0\}
$$
where $M_n(A)$ if the $A$-algebra of the square matrices of size $n$
with entries in $A$. 

Throughout this paper, except the subsection 
\ref{subsec:l-factor-of-induced-rep-of-weil-group}, we will
assume that the characteristic $p$ of $\Bbb F$ does not divide $n$ so
that the trace form
$$
 \frak{g}(\Bbb F)\times\frak{g}(\Bbb F)\to\Bbb F
 \qquad
 \left((X,Y)\mapsto\text{\rm tr}(XY)\right)
$$
is non-degenerate. 

For any integers $0<l<r$, the canonical group homomorphism 
$$
 G(O_F/\frak{p}^r)\to G(O/\frak{p}^l)
$$
is surjective, and its kernel
is denoted by $G(\frak{p}^l/\frak{p}^r)$. If 
$r=l+l^{\prime}$ with $0<l^{\prime}\leq l$, then we have a group
isomorphism
$$
 \frak{g}(O/\frak{p}^{l^{\prime}})\,\tilde{\to}\,
 G(\frak{p}^l/\frak{p}^r)
 \quad
 \left(\overline X\mapsto\overline{1_n+\varpi^lX}\right).
$$
Fix an integer $r\geq 2$ and put 
$r=l+l^{\prime}$ with the minimum integer $l$ such that 
$0<l^{\prime}\leq l$, that is 
$$
 l^{\prime}=\left[\frac r2\right]
           =\begin{cases}
             l&:\text{\rm if $r=2l$},\\
             l-1&:\text{\rm if $r=2l-1$}.
            \end{cases}
$$
Let $K/F$ be a field extension of degree $n$. 
The ramification index and the
inertial degree of $K/F$ are denoted by
$$
 \text{\rm $e=e(K/F)$ and $f=f(K/F)$}
$$
respectively. Since $n$ is prime to $p$, the extension $K/F$ is tamely
ramified 
and there exists a prime element $\varpi_K\in O_K$ such that 
$\varpi_K^e\in O_{K_0}$ where $K_0$ is the maximal unramified
subsextionsion of $K/F$. 

We will identify $K$ with a
$F$-subalgebra of the matrix algebra $M_n(F)$ 
by means of the regular representation with respect to
an $O_F$-basis of $O_K$. Take a generator $\beta$ of $O_K$ as an
$O_F$-algebra. Then Shintani \cite{Shintani1968} shows that the
modulo $\frak{p}$ reduction of the 
characteristic polynomial $\chi_{\beta}(t)\in O_F[t]$ of 
$\beta\in M_n(O_F)$ gives the minimal polynomial of 
$\beta\npmod{\frak p}\in M_n(\Bbb F)$, that is 
$\beta\in\frak{gl}_n(O_F)$ is smoothly regular with respect to 
$GL_n$ in the terminology of \cite{Takase2019}. We can assume that 
$T_{K/F}(\beta)=\text{\rm tr}\,\beta=0$ so that 
$\beta\in\frak{g}(O_F)$ is smoothly regular with respect to 
$G=SL_n$. Define a character
$$
 \psi_{\beta}:G(\frak{p}^l/\frak{p}^r)\to\Bbb C^{\times}
$$
by 
$\psi_{\beta}(h)
 =\tau\left(\varpi^{-l^{\prime}}\text{\rm tr}(X\beta)\right)$ for all 
$h=\overline{1_n+\varpi^lX}\in G(\frak{p}^l/\frak{p}^r)$. 
Let $\delta$ be an irreducible representation of $G(O_F/\frak{p}^r)$
such that $\delta|_{G(\frak{p}^l/\frak{p}^r)}$ contains the character
$\psi_{\beta}$. We will regard $\delta$ as a representation of
$G(O_F)$ via the canonical surjection $G(O_F)\to
G(O_F/\frak{p}^r)$. Then our first result is

\begin{thm}\label{th:construction-of-supercuspidal-representation}
If $r\geq 2e$, then the compactly induced representation 
$\text{\rm ind}_{G(O_F)}^{G(F)}\delta$ is an irreducible supercuspidal
representation of $G(F)$.
\end{thm}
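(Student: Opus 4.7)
The plan is to apply Mackey's criterion for irreducibility of a compactly induced representation: writing $K=G(O_{F})$, it suffices to prove that
$$\mathrm{Hom}_{K\cap g^{-1}Kg}\bigl(\delta,\delta^{g}\bigr)=0\qquad\text{for every }g\in G(F)\setminus K,$$
where $\delta^{g}(x)=\delta(gxg^{-1})$. Supercuspidality of $\mathrm{ind}_{K}^{G(F)}\delta$ is then automatic, because $K$ is an open maximal compact subgroup of $G(F)$ containing the finite centre $\mu_{n}(F)$, so all matrix coefficients of the compactly induced representation have compact support modulo centre. By the Cartan decomposition $G(F)=K\cdot A^{-}\cdot K$ it is enough to verify the vanishing for diagonal representatives $g=g_{\lambda}=\mathrm{diag}(\varpi^{\lambda_{1}},\dots,\varpi^{\lambda_{n}})$ with $\lambda_{1}\ge\cdots\ge\lambda_{n}$, $\sum_{i}\lambda_{i}=0$ and $\lambda\ne 0$.

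Fix such a $g$ and suppose, for contradiction, that a non-zero $T\in\mathrm{Hom}_{K\cap g^{-1}Kg}(\delta,\delta^{g})$ exists. The normal subgroup $N=\ker(K\to G(O_{F}/\mathfrak{p}^{l}))$ maps onto the abelian group $G(\mathfrak{p}^{l}/\mathfrak{p}^{r})\simeq\mathfrak{g}(O_{F}/\mathfrak{p}^{l^{\prime}})$ modulo $G(\mathfrak{p}^{r})$, and Clifford theory shows that $\delta|_{N}$ decomposes as a sum of $K$-conjugates of $\psi_{\beta}$. Restricting $T$ to the intersection $N\cap g^{-1}Ng$ then forces some $K$-conjugate $\psi_{\mathrm{Ad}(k)\beta}$ to coincide there with $\psi_{\mathrm{Ad}(g)\beta}$; via the non-degeneracy of the trace pairing on $\mathfrak{g}(\mathbb{F})$, this amounts to the existence of $k\in K$ with
$$(k^{-1}g)\,\beta\,(k^{-1}g)^{-1}\equiv\beta\pmod{\mathfrak{p}^{l^{\prime}}M_{n}(O_{F})}$$
on the appropriate $g$-dependent sublattice. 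Replacing $g$ by $k^{-1}g$ in its double coset (harmless for the desired conclusion $g\in K$), we may assume $g\beta g^{-1}\equiv\beta$ modulo the same congruence ideal.

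The heart of the proof will be to extract from this single integral congruence the global factorization $g\in K^{\times}\cdot K$. The hypothesis $r\ge 2e$, equivalent to $l^{\prime}\ge e$, is decisive: it makes the congruence fine enough to detect the ramification of $K/F$. Shintani's observation that $\overline{\chi}_{\beta}\in\mathbb{F}[t]$ is the minimal polynomial of $\beta\npmod{\mathfrak{p}}$ (i.e., smooth regularity of $\beta$ with respect to $G=SL_{n}$) implies that the centralizer of $\beta$ in $\mathfrak{g}(O_{F}/\mathfrak{p}^{l^{\prime}})$ is exactly the image of $O_{K}\cap\mathfrak{g}(O_{F})$, and a Hensel-type lifting applied to the congruence should then yield the exact factorization $g\in K^{\times}\cdot K$. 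Under the regular embedding $K\hookrightarrow M_{n}(F)$ the determinant coincides with $N_{K/F}$, so $K^{\times}\cap G(F)=\ker N_{K/F}\subseteq O_{K}^{\times}\subseteq K$; combined with $g\in(K^{\times}\cdot K)\cap G(F)$ this forces $g\in K$, contradicting $g\notin K$.

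The principal obstacle will be this Hensel-lifting step: converting the matrix congruence $g\beta g^{-1}\equiv\beta\pmod{\mathfrak{p}^{l^{\prime}}}$ into the global statement $g\in K^{\times}\cdot K$. It requires a careful comparison of the $\mathfrak{p}$-adic filtration on $\mathfrak{g}(O_{F})$ with the $\varpi_{K}$-adic filtration on $O_{K}\hookrightarrow M_{n}(O_{F})$, and it is precisely here that the depth hypothesis $r\ge 2e$ must be used in its sharpest form. The transposition from Shintani's original $GL_{n}$ argument to $SL_{n}$ introduces only the mild added constraint of working inside the trace-zero Lie subalgebra, which should be handled uniformly at each stage of the argument.
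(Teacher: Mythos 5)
The central difficulty of the theorem is exactly the step you defer: proving that no nontrivial Cartan double coset intertwines $\delta$. Your plan to pass from the character identity on $N\cap g^{-1}Ng$ to a congruence $g\beta g^{-1}\equiv\beta$ and then to a factorization $g\in K^{\times}\cdot G(O_F)$ by ``Hensel-type lifting'' is not carried out, and as proposed it would fail: for a representative $\varpi^{m}$ with a large gap $m_i-m_{i+1}$, the congruence you can extract from the intertwining operator holds only on the $g$-dependent sublattice of $X$ with both $X$ and $\varpi^{m}X\varpi^{-m}$ integral, which is far too thin to control $\beta$ or to force integrality of $\varpi^{m}g\beta g^{-1}\varpi^{-m}$. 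The paper's proof (Proposition \ref{prop:minimal-k-type-of-induced-rep}) has to proceed in two stages: first, for cosets with $m_i-m_{i+1}\geq 2$ (resp.\ $=1$) it restricts the character $\psi_{\text{\rm Ad}(g)\beta}$ to the unipotent radicals $U_i(\frak{p}^{\,r-2})$ and uses Shintani's Proposition \ref{prop:properties-of-chracteristic-polynomial-of-beta} --- $\chi_{\beta}$ is irreducible mod $\frak{p}^2$, and mod $\frak{p}$ it is $p(t)^e$ with $\deg p=f$ --- to exclude these cosets and bound $m_1-m_n<e\leq l^{\prime}$ (this is where $r\geq 2e$ enters, not in any lifting); only then, comparing characters on the nearly full congruence subgroup $G(\frak{p}^{\,l+m_1-m_n})$, does one obtain $\varpi^{m}g\beta g^{-1}\varpi^{-m}\in M_n(O_F)$, and the endgame uses the rigidity consequences of smooth regularity recorded in Remark \ref{rem:implication-of-property-of-characteristic-polynomial-of-beta} (an integral matrix with characteristic polynomial $\chi_{\beta}$ is $GL_n(O_F)$-conjugate to $\beta$, and the centralizer of $\beta$ in $M_n(O_F)$ is $O_K$), followed by the determinant/norm-unit argument to force $m=0$. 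None of this is Hensel's lemma, and your proposal supplies no substitute for the unipotent-radical/characteristic-polynomial step, so there is a genuine gap at the heart of the argument.

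Two smaller points. Your framing via the Mackey/intertwining criterion is legitimate and essentially equivalent to the paper's route (the paper proves multiplicity one of $\delta$ in $\text{\rm ind}_{G(O_F)}^{G(F)}\delta|_{G(O_F)}$ by the same Mackey decomposition over the Cartan cosets, plus a separate admissibility proposition), so the difference is not the framework but the missing double-coset computation; if you rely on the criterion ``trivial intertwining $\Rightarrow$ irreducible and supercuspidal'' you should at least invoke it precisely, since your assertion that matrix coefficients of a compactly induced representation are ``automatically'' compactly supported modulo centre is false without irreducibility (or admissibility) already in hand. Finally, your endgame observation that an element of $K^{\times}$ of determinant (= norm) a unit lies in $O_K^{\times}\subset GL_n(O_F)$ does match the paper's concluding step, but it only becomes available after the factorization has been produced by the two-stage argument above.
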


\subsection[]{}
The general theory developed in 
\cite{Takase2019} says that the irreducible representation 
$\delta$ of $G(O_F/\frak{p}^r)$ as above is parametrized by 
the character 
$$
 \theta:G_{\beta}(O_F/\frak{p}^r)\to\Bbb C^{\times}
$$
such that $\theta=\psi_{\beta}$ on 
$G_{\beta}(O_F/\frak{p}^r)\cap G(\frak{p}^l/\frak{p}^r)$. The explicit
realization of $\delta$, which is recalled in the subsection 
\ref{subsec:explicit-description-of-delta}, gives 

\begin{prop}\label{prop:dimension-of-delta}
$$
 \dim\delta
 =\frac{q^{rn(n-1)/2}}
       {(O_F^{\times}:N_{K/F}(O_K^{\times}))}\cdot
  \frac{\prod_{k=1}^n(1-q^{-k})}
       {1-q^{-f}}.
$$
\end{prop}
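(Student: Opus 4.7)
My approach is to read off the dimension from the explicit realization of $\delta$ to be recalled in subsection \ref{subsec:explicit-description-of-delta}. That realization presents $\delta$ as induced from the $\psi_{\beta}$-stabilizer in $G(O_F/\frak{p}^r)$, which by the smooth regularity of $\beta$ equals $K_1:=G_{\beta}(O_F/\frak{p}^r)\cdot G(\frak{p}^{l^{\prime}}/\frak{p}^r)$. Setting $K_0:=G_{\beta}(O_F/\frak{p}^r)\cdot G(\frak{p}^l/\frak{p}^r)$, one has
$$
\dim\delta=[G(O_F/\frak{p}^r):K_1]\cdot\sqrt{[K_1:K_0]},
$$
where in the even case $r=2l$ the group $G(\frak{p}^l/\frak{p}^r)$ is abelian, $K_0=K_1$, and the square root is $1$; in the odd case $r=2l-1$ the quotient $K_1/K_0$ carries the non-degenerate symplectic form $(X,Y)\mapsto\widehat\tau(\text{\rm tr}([X,Y]\beta))$ with radical $\frak{g}_{\beta}(\Bbb F)$, so $[K_1:K_0]=q^{n(n-1)}$ and the Heisenberg-Weil factor is $q^{n(n-1)/2}$.

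The task then reduces to an orders-of-groups calculation. The standard identities
$$
|G(O_F/\frak{p}^r)|=q^{(r-1)(n^2-1)}|SL_n(\Bbb F)|,\qquad |SL_n(\Bbb F)|=q^{n(n-1)/2}\prod_{k=2}^n(q^k-1),
$$
together with $|G(\frak{p}^l/\frak{p}^r)|=q^{(n^2-1)l^{\prime}}$ and $|G_{\beta}(\frak{p}^l/\frak{p}^r)|=q^{(n-1)l^{\prime}}$ (from the Lie-algebra isomorphism stated in the introduction and the fact that $G_{\beta}$ is an $O_F$-smooth torus of relative dimension $n-1$) settle every factor except $|G_{\beta}(O_F/\frak{p}^r)|$. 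For the latter I identify $G_{\beta}$ with the norm-one subscheme of $\text{\rm Res}_{O_K/O_F}\Bbb G_m$ via the embedding $K\hookrightarrow M_n(F)$ and use the exact sequence
$$
1\to G_{\beta}(O_F/\frak{p}^r)\to(O_K/\frak{p}_K^{re})^{\times}\xrightarrow{N_r}(O_F/\frak{p}^r)^{\times},
$$
combined with $|(O_K/\frak{p}_K^{re})^{\times}|=(q^f-1)q^{rn-f}$ and $|(O_F/\frak{p}^r)^{\times}|=(q-1)q^{r-1}$, to get
$$
|G_{\beta}(O_F/\frak{p}^r)|=(O_F^{\times}:N_{K/F}(O_K^{\times}))\cdot\frac{q^f-1}{q-1}\cdot q^{r(n-1)-(f-1)}.
$$

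Substituting the four orders into the expression for $\dim\delta$ and simplifying, the even and the odd cases both produce the same value $q^{rn(n-1)/2-n(n+1)/2+f}\cdot\prod_{k=1}^n(q^k-1)/\bigl[(q^f-1)(O_F^{\times}:N_{K/F}(O_K^{\times}))\bigr]$, which rewrites as the stated formula. The main delicate step, and what I expect to be the principal obstacle, is the norm-map computation: showing that for tamely ramified $K/F$ the cokernel of $N_r$ at finite level is precisely $(O_F^{\times}:N_{K/F}(O_K^{\times}))$ requires $N_{K/F}(1+\frak{p}_K^{re})\supset 1+\frak{p}^r$, which should follow from tameness via the standard filtration arguments on the one-unit groups.
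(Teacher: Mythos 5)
Your argument is essentially the paper's: both realize $\delta=\operatorname{Ind}_{G(O_F/\frak{p}^r,\psi_\beta)}^{G(O_F/\frak{p}^r)}\sigma_{\beta,\theta}$, use $\dim\sigma_{\beta,\theta}\in\{1,q^{n(n-1)/2}\}$, read off $G(O_F/\frak{p}^r,\psi_\beta)=G_\beta(O_F/\frak{p}^r)\cdot G(\frak{p}^{l'}/\frak{p}^r)$, and compute $|G_\beta|$ via the finite-level norm sequence (the paper chooses to cancel down to level $l'$ before counting, but that is bookkeeping). One small remark: the technical point you flag is overstated — you do not need $N_{K/F}(1+\frak{p}_K^{re})\supset 1+\frak{p}^r$, only $1+\frak{p}^r\subset N_{K/F}(O_K^\times)$, which is immediate from the tame-case identity $N_{K/F}(1+\frak{p}_K)=1+\frak{p}$ that the paper quotes from Cassels--Fr\"ohlich.
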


Let $d_{G(F)}(x)$ be the Haar measure on $G(F)$ with respect to which
the volume of $G(O_F)$ is $1$. Then the Euler-Poincar\'e measure 
$\mu_{G(F)}$ on $G(F)$ is
$$
 d\mu_{G(F)}(x)
 =(-1)^{n-1}q^{n(n-1)/2}\prod_{k=1}^{n-1}(1-q^{-k})\cdot 
  d_{G(F)}(x)
$$
(see \cite[3.4, Theor\'eme 7]{Serre1971}).
Since the formal degree of the supercuspidal representation 
$\text{\rm ind}_{G(O_F)}^{G(F)}\delta$ (assuming $r\geq 2e$) 
with respect to $d_{G(F)}(x)$
is the dimension of $\delta$, 
Proposition \ref{prop:dimension-of-delta} gives our second result 

\begin{thm}\label{th:formal-degree-wrt-euler-poincare-measur}
Assume $r\geq 2e$. Then the formal degree of the supercuspidal
representation $\text{\rm ind}_{G(O_F)}^{G(F)}\delta$ with respect to
the Euler-Poincar\'e measure on $G(F)$ is
$$
 \frac{q^{(r-1)n(n-1)/2}}
      {(O_F^{\times}:N_{K/F}(O_K^{\times}))}\cdot
 \frac{1-q^{-n}}
      {1-q^{-f}}.
$$
\end{thm}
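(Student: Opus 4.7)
The plan is to combine three ingredients: the dimension computation of Proposition \ref{prop:dimension-of-delta}, the explicit proportionality between $d_{G(F)}$ and the Euler-Poincar\'e measure $\mu_{G(F)}$ recorded just above the statement, and the standard identification of the formal degree of a compactly induced representation with the dimension of the inducing type (normalized by the volume of the inducing subgroup).

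First I would note that, because $r\geq 2e$, Theorem \ref{th:construction-of-supercuspidal-representation} guarantees that $\pi = \text{\rm ind}_{G(O_F)}^{G(F)}\delta$ is irreducible supercuspidal. Since $G(O_F)$ has volume $1$ under $d_{G(F)}$, the familiar Schur-orthogonality argument applied to matrix coefficients of $\pi$ coming from vectors supported on $G(O_F)$ shows that the formal degree of $\pi$ with respect to $d_{G(F)}$ equals $\dim\delta$.

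Next I would substitute the value of $\dim\delta$ supplied by Proposition \ref{prop:dimension-of-delta} and divide by the conversion factor $(-1)^{n-1}q^{n(n-1)/2}\prod_{k=1}^{n-1}(1-q^{-k})$ relating $d_{G(F)}$ to $d\mu_{G(F)}$. The power of $q$ simplifies via $q^{rn(n-1)/2}/q^{n(n-1)/2}=q^{(r-1)n(n-1)/2}$, and the ratio of products collapses via $\prod_{k=1}^{n}(1-q^{-k}) = (1-q^{-n})\prod_{k=1}^{n-1}(1-q^{-k})$ to the single factor $1-q^{-n}$. The denominator $1-q^{-f}$ and the index $(O_F^{\times}:N_{K/F}(O_K^{\times}))$ are untouched by the computation, and one reads off the claimed formula.

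The proof is thus essentially bookkeeping; the representation-theoretic substance has already been packaged into Proposition \ref{prop:dimension-of-delta}, whose proof (relying on the explicit realization of $\delta$ recalled in the subsection on the explicit description of $\delta$) carries the real work. The only minor point requiring care is the sign $(-1)^{n-1}$ intrinsic to the Euler-Poincar\'e measure, which is absorbed by the convention that the formal degree is reported as a positive real number; I do not expect any genuine obstacle beyond this.
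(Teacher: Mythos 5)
Your proposal is correct and coincides with the paper's own argument: the paper likewise notes that the formal degree with respect to the Haar measure giving $G(O_F)$ volume $1$ equals $\dim\delta$, then converts by the Euler--Poincar\'e constant $q^{n(n-1)/2}\prod_{k=1}^{n-1}(1-q^{-k})$ and invokes Proposition \ref{prop:dimension-of-delta}. The bookkeeping you describe (cancelling $q^{n(n-1)/2}$ and the product up to $n-1$, leaving $1-q^{-n}$) is exactly the computation intended there.
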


\subsection[]{}
\label{subsec:ratio-of-gamma-factors}
Now suppose that the field extension $K/F$ is Galois. Let 
$$
 \delta_K:K^{\times}\,\tilde{\to}\,
     W_K^{\text{\rm ab}}=W_K/\overline{[W_K,W_K]}
$$
be the isomorphism of the local class field theory with the Weil group
$W_K$ of $K$. We assume that 
$$
 \delta_K(\varpi_K)\in W_K^{\text{\rm ab}}
  \subset\text{\rm Gal}(K^{\text{\rm ab}}/K)
$$
induces the geometric Frobenius automorphism of $K^{\text{\rm ur}}/K$
where $K^{\text{\rm ab}}$ and $K^{\text{\rm ur}}$ are the maximal
abelian and the maximal unramified extension of $K$ respectively. 
Then the relative Weil group 
$$
 W_{K/F}=W_F/\overline{[W_K,W_K]}
 \subset\text{\rm Gal}(K^{\text{\rm ab}}/F)
$$
sits in a group extension 
\begin{equation}
 1\to K^{\times}
  \xrightarrow{\delta_K} W_{K/F}
  \xrightarrow{\text{\rm res.}}\text{\rm Gal}(K/F)\to 1
\label{eq:relative-weil-group-is-group-extension-of-fundamental-class}
\end{equation}
corresponding to the fundamental class 
$[\alpha_{K/F}]\in H^2(\text{\rm Gal}(K/F),K^{\times})$ of the local
class field theory. 

Let $\theta:G_{\beta}(O_F/\frak{p}^r)\to\Bbb C^{\times}$ be the
character which parametrizes the irreducible representation $\delta$ of
$G(O_F/\frak{p}^r)$. Since we have
$$
 G_{\beta}(O_F/\frak{p}^r)
 =\left\{\overline x\in\left(O_K/\frak{p}_K^{er}\right)^{\times}
   \biggm| N_{K/F}(x)\equiv 1\npmod{\frak{p}^r}\right\},
$$
take an extension of $\theta$ to a character of 
$\left(O_K/\frak{p}_K^{er}\right)^{\times}$ and consider it as a
  character of $O_K^{\times}$ via the canonical surjection 
$O_K^{\times}\to\left(O_K/\frak{p}_K^{er}\right)^{\times}$ and extend
it to a character of $K^{\times}$, which is denoted also by
$\theta$, by fixing any value $\theta(\varpi_K)\in\Bbb C^{\times}$. 
Then the group homomorphism 
$$
 \Theta:W_{F/K}
    \xrightarrow{\text{\rm Ind}_{K^{\times}}^{W_{K/F}}\theta}
       GL_{\Bbb C}(V)
     \xrightarrow{\text{\rm canonical}}PGL_{\Bbb C}(V)
$$
($V$ is the representation space of the induced representation 
$\text{\rm Ind}_{K^{\times}}^{W_{K/F}}\theta$) is independent, 
up to the conjugate in $PGL_{\Bbb C}(V)$,  of the
     choice of the extension $\theta$ (see Proposition 
\ref{prop:rep-theta-of-relative-weil-group-is-indep-of-extension}). 
Note that 
$$
 \dim_{\Bbb C}V=(W_{K/F}:K^{\times})=n
$$
and that $PGL_{\Bbb C}(V)$ is the dual group of $G=SL_n$. Put
$$
 \rho:W_F\xrightarrow{\text{\rm canonical}}W_{K/F}
   \xrightarrow{\Theta} PGL_{\Bbb C}(V)
$$
and define a representation of the Weil-Deligne group
\begin{equation}
 \varphi:W_F\times SL_2(\Bbb C)
          \xrightarrow{\text{\rm projection}}W_F
          \xrightarrow{\rho}PGL_{\Bbb C}(V).
\label{eq:representation-of-weil-deligne-group-for-supercuspidal-rep}
\end{equation}
Let us denote by $A_{\varphi}$ the centralizer of the image of $\varphi$ in 
$PGL_{\Bbb C}(V)$. Then we will show that
\begin{equation}
 |A_{\varphi}|=(O_F^{\times}:N_{K/F}(O_K^{\times}))\cdot f
\label{eq:order-of-centralizer-of-image-of-varphi-in-pgl(v)}
\end{equation}
(see Proposition
\ref{prop:a-theta-is-character-group-of-gal-k-over-f}). 
Our third result is

\begin{thm}\label{th:ratio-of-gamma-factor}
$$
 \frac 1{|A_{\varphi}|}\left|\frac{\gamma(\varphi)}
                                 {\gamma(\varphi_0)}\right|
 =\frac{q^{(r-1)n(n-1)/2}}
       {(O_F^{\times}:N_{K/F}(O_K^{\times}))}\cdot
  \frac{1-q^{-n}}
       {1-q^{-f}}.
$$
\end{thm}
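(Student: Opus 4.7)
The plan is to compute $|\gamma(\varphi)/\gamma(\varphi_0)|$ explicitly and match it with the formal degree of Theorem \ref{th:formal-degree-wrt-euler-poincare-measur}, using the value of $|A_{\varphi}|=(O_F^{\times}:N_{K/F}(O_K^{\times}))\cdot f$ from (\ref{eq:order-of-centralizer-of-image-of-varphi-in-pgl(v)}). Equivalently, we must show
$$
 \left|\frac{\gamma(\varphi)}{\gamma(\varphi_0)}\right|
 =f\cdot q^{(r-1)n(n-1)/2}\cdot\frac{1-q^{-n}}{1-q^{-f}}.
$$
The computation proceeds via a Mackey decomposition of the adjoint representation $\text{\rm Ad}\circ\varphi$ of $PGL_{\Bbb C}(V)$, followed by the inductivity (in degree $0$) of local $\gamma$-factors of Weil--Deligne representations.

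\textbf{Mackey decomposition.} Since the dual group is $PGL_{\Bbb C}(V)$, the adjoint action on $\frak{sl}(V)$ is the virtual Weil--Deligne representation $V\otimes V^{\vee}\ominus\mathbf{1}$. Because $K/F$ is Galois, $K^{\times}$ is normal in $W_{K/F}$ with quotient $\text{\rm Gal}(K/F)$, and Mackey's identity (together with the projection formula for induced representations) yields
$$
 V\otimes V^{\vee}
 \simeq\bigoplus_{\sigma\in\text{\rm Gal}(K/F)}
   \text{\rm Ind}_{K^{\times}}^{W_{K/F}}\bigl(\theta^{\sigma}\theta^{-1}\bigr).
$$
For $\sigma=1$ the summand $\text{\rm Ind}(\mathbf{1})\simeq\bigoplus_{\chi}\chi$ ranges over the characters $\chi$ of $\text{\rm Gal}(K/F)$; among the $n-1$ nontrivial ones, exactly $f-1$ are unramified (those factoring through $\text{\rm Gal}(K_0/F)\simeq\Bbb Z/f\Bbb Z$) and $n-f$ are ramified. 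The unramified constituents, when combined with the reference parameter $\varphi_0$ of Gross--Reeder (the principal unipotent $SL_2$-parameter), produce the ratio $(1-q^{-n})/(1-q^{-f})$ via their purely $L$-factor contributions.

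\textbf{Conductor calculation and main obstacle.} For each $\sigma\neq 1$, inductivity of local constants (up to the Langlands $\lambda$-constant $\lambda(K/F,\psi_F)$, which is unimodular and cancels in the ratio) yields $|\gamma(0,\text{\rm Ind}(\theta^{\sigma}\theta^{-1}),\psi_F)|=|\gamma(0,\theta^{\sigma}\theta^{-1},\psi_K)|$, where $\psi_K=\psi_F\circ T_{K/F}$ and $n(\psi_K)=-(e-1)$. From the explicit form $\theta(1+x)=\widehat\tau_K(\tilde\beta\cdot x)$ at the deepest level, with $\tilde\beta$ a representative of $\varpi^{-l^{\prime}}\beta$, one reads off the conductor
$$
 a(\theta^{\sigma}\theta^{-1})=\begin{cases}er&(\sigma\notin\text{\rm Gal}(K/K_0)),\\ er-v_K(\sigma\beta-\beta)&(\sigma\in\text{\rm Gal}(K/K_0)\setminus\{1\}),\end{cases}
$$
since $\sigma(\tilde\beta)-\tilde\beta$ has the same valuation as $\tilde\beta$ exactly when $\sigma$ acts nontrivially on the residue field, and drops by $v_K(\sigma\beta-\beta)$ in the inertial case. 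Summing and using the different identity $\sum_{\sigma\in\text{\rm Gal}(K/K_0)\setminus\{1\}}v_K(\sigma\beta-\beta)=v_K(\frak{D}_{K/F})=e-1$ (valid because $\beta$ generates $O_K$ over $O_F$ and $K/F$ is tame), one assembles the ramified contribution $q^{(r-1)n(n-1)/2}$ after applying $q_K=q^f$ and the absolute-value formula $|\epsilon(0,\chi,\psi_K)|=q_K^{(a(\chi)+n(\psi_K))/2}$ for each ramified $\chi=\theta^{\sigma}\theta^{-1}$. The main obstacle is exactly this conductor bookkeeping: one must carefully track the $L$-factor contributions from both the full induced trivial character (providing the $(1-q^{-n})^{-1}$ factor) and the unramified characters of $\text{\rm Gal}(K/F)$ (providing the $(1-q^{-f})^{-1}$ factor), verify that the reference parameter $\varphi_0$ cancels these in the ratio while producing the leading $f$, and handle the pro-$p$/depth-zero interface of $\theta$ when $r=2l-1$ is odd so that $l^{\prime}<l$.
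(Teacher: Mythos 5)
Your overall route --- decompose $\text{\rm Ad}\circ\varphi$ via Mackey as $\bigoplus_{\sigma\in\text{\rm Gal}(K/F)}\text{\rm Ind}_{K^{\times}}^{W_{K/F}}(\theta^{\sigma}\theta^{-1})\ominus\mathbf{1}$ and then use inductivity of $\varepsilon$/$\gamma$-factors --- is legitimate and genuinely different from the paper, which never decomposes: it computes $\widehat{\frak g}^{\,\text{\rm Ad}\circ\varphi(I_F)}$ and the Frobenius action on it in explicit matrix form to get $L(\varphi,\text{\rm Ad},s)=(1+q^{-s}+\cdots+q^{-(f-1)s})^{-1}$, and computes the Artin conductor $a(\text{\rm Ad}\circ\rho)=rn(n-1)$ directly from the higher ramification filtration $D_t$ of $K_{er}F^{\text{\rm ur}}/F^{\text{\rm ur}}$ (Lubin--Tate division field), with Proposition \ref{prop:level-structure-of-theta} as the key input. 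Your $L$-factor discussion is consistent with this (though writing $\text{\rm Ind}(\mathbf 1)\simeq\bigoplus_{\chi}\chi$ over characters of $\text{\rm Gal}(K/F)$ tacitly assumes $\text{\rm Gal}(K/F)$ abelian, which is not assumed; one should instead use that the $I_F$-invariants of $\text{\rm Ind}(\mathbf 1)$ are $f$-dimensional with Frobenius acting by a cyclic permutation).

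The genuine gap is in the step you yourself flag as the main obstacle: the conductor bookkeeping, and as stated it is wrong. On deep levels $\theta(1+z)=\tau\left(T_{K/F}(\varpi^{-r}\beta z)\right)$ (the twist is by $\varpi^{-r}\beta$, not $\varpi^{-l^{\prime}}\beta$), and since the $\tau\circ T_{K/F}$-dual of $O_K$ is $\mathcal{D}(K/F)^{-1}=\frak{p}_K^{1-e}$, one gets (this is exactly Proposition \ref{prop:level-structure-of-theta}) $a\left(\theta^{\sigma}\theta^{-1}\right)=e(r-1)+1$ for $\sigma\notin\text{\rm Gal}(K/K_0)$ and $a\left(\theta^{\sigma}\theta^{-1}\right)=e(r-1)$ for $1\neq\sigma\in\text{\rm Gal}(K/K_0)$, i.e. your values $er$ and $er-v_K(\sigma\beta-\beta)=er-1$ miss the shift by the different exponent $e-1$. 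Your normalization $n(\psi_K)=-(e-1)$ is also inconsistent with the modulus formula you use: since $|\varepsilon_F(\text{\rm Ind}_{K/F}\chi,0)|=q^{(f(e-1)+f\,a_K(\chi))/2}$ by the conductor-discriminant formula (and $|\lambda(K/F,\psi_F)|=1$ at this point), one needs $q_K^{(a_K(\chi)+(e-1))/2}$, not $q_K^{(a_K(\chi)-(e-1))/2}$. These two errors do not cancel: with your numbers each nontrivial $\sigma$ contributes $q_K^{-(e-1)/2}$ too little, so the assembled power of $q$ is $q^{(r-1)n(n-1)/2}\cdot q^{-f(e-1)(n-1)/2}$, which differs from the claimed $q^{(r-1)n(n-1)/2}$ whenever $K/F$ is ramified ($e>1$). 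With the corrected conductors the sum $\sum_{\sigma}\left(f(e-1)+f\,a_K(\theta^{\sigma}\theta^{-1})\right)$ equals exactly $rn(n-1)$, recovering the paper's value of $a(\text{\rm Ad}\circ\rho)$ and hence the theorem; so the route can be repaired, but the decisive computation in your proposal is incorrect as written (and the final assembly is asserted rather than carried out).
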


Here $\gamma(\varphi)=\gamma(\varphi,\text{\rm Ad},0)$ is the special
value of the gamma factor
$$
 \gamma(\varphi,\text{\rm Ad},s)
 =\varepsilon(\varphi,\text{\rm Ad},s)\cdot
  \frac{L(\varphi,\text{\rm Ad},1-s)}
       {L(\varphi,\text{\rm Ad},s)}
$$
associated with the representation 
$$
 \text{\rm Ad}\circ\varphi:W_F\times SL_2(\Bbb C)
  \xrightarrow{\varphi}PGL_{\Bbb C}(V)
  \xrightarrow{\text{\rm Ad}}
  GL_{\Bbb C}(\widehat{\frak g})
$$
of the Weil-Deligne group, where 
$\widehat{\frak g}=\frak{pgl}_{\Bbb C}(V)=\frak{sl}_{\Bbb C}(V)$ is
the Lie algebra of $PGL_{\Bbb C}(V)$. Another representation 
$$
 \varphi_0:W_F\times SL_2(\Bbb C)
  \xrightarrow{\text{\rm proj.}}SL_2(\Bbb C)
  \xrightarrow{\text{\rm Sym}_{n-1}}GL_n(\Bbb C)
  \xrightarrow{\text{\rm can.}}PGL_n(\Bbb C)
$$
of the Weil-Deligne group, with the symmetric tensor representation 
$\text{\rm Sym}_{n-1}$, is the {\it principal parameter} 
(that is, corresponding to the Steinberg representation, 
see the subsection \ref{subsec:principal-parameter-of-weil-group}). 

\subsection[]{}
\label{subsec:concluding-remark}
Since $G=SL_n$ is split over $F$, the $L$-group of $G$ is 
$PGL_n(\Bbb C)$. 
Theorem \ref{th:formal-degree-wrt-euler-poincare-measur} and 
Theorem \ref{th:ratio-of-gamma-factor} shows that the formula due to 
Hiraga-Ichino-Ikeda \cite{H-I-I2008} of the formal degree of 
the suparcuspidal
representation $\text{\rm ind}_{G(O_F)}^{G(F)}\delta$ is valid with
the representation of Weil-Deligne group given by 
\eqref{eq:representation-of-weil-deligne-group-for-supercuspidal-rep}. So
we can speculate that the representation 
\eqref{eq:representation-of-weil-deligne-group-for-supercuspidal-rep}
is the Langlands (or Arthur)  parameter of the supercuspidal
representation $\text{\rm ind}_{G(O_F)}^{G(F)}\delta$.

\section{Construction of supercuspidal representation}
\label{sec:construction-of-supercuspidal-representation}

\subsection[]{}
In this section we will prove Theorem
\ref{th:construction-of-supercuspidal-representation}. It is
sufficient to show the following two propositions on the compactly
induced representation $\text{\rm ind}_{G(O_F)}^{G(F)}\delta$;

\begin{prop}\label{prop:admissibility-of-compactly-induced-rep}
If $K/F$ is unramified or $r\geq 4$, then 
$\text{\rm ind}_{G(O_F)}^{G(F)}\delta$ is admissible representation of
$G(F)$. 
\end{prop}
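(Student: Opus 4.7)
The plan is to verify the standard Mackey-style formula
$$
 \pi^U \;=\; \bigoplus_{g\in G(O_F)\backslash G(F)/U} \delta^{\,G(O_F)\cap gUg^{-1}}
$$
for $\pi = \text{\rm ind}_{G(O_F)}^{G(F)}\delta$ and each open compact $U\subseteq G(F)$, and then to show that only finitely many double cosets $G(O_F)\,g\,U$ contribute a non-zero summand. Since each summand has dimension at most $\dim\delta<\infty$, this is what admissibility amounts to. It is enough to take $U=G(\frak{p}^m)$ with $m\geq r$, and by the Cartan decomposition $G(F)=\bigsqcup_\lambda G(O_F)\,\varpi^\lambda\,G(O_F)$ with $\lambda=(a_1\geq\cdots\geq a_n)$, $\sum a_i=0$, to specialize to $g=\varpi^\lambda$.

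A direct matrix computation gives
$$
 H_{\lambda,m}\;:=\;G(O_F)\cap\varpi^\lambda G(\frak{p}^m)\varpi^{-\lambda}
 \;=\;\{\,1+(z_{ij})\in G(O_F)\mid z_{ij}\in\frak{p}^{\max(0,\,m+a_i-a_j)}\,\},
$$
so whenever $a_j-a_i\geq m$ for some $i>j$, the $(i,j)$-entries of $H_{\lambda,m}$ are unrestricted in $O_F$, and the image of $H_{\lambda,m}$ in $G(O_F/\frak{p}^r)$ contains a full lower ``root subgroup'' at position $(i,j)$. Combining this with Clifford theory applied to the normal inclusion $G(\frak{p}^l/\frak{p}^r)\triangleleft G(O_F/\frak{p}^r)$, the restriction $\delta|_{G(\frak{p}^l/\frak{p}^r)}$ decomposes as a direct sum of $G(O_F/\frak{p}^r)$-conjugates of $\psi_\beta$, and a $\delta$-fixed vector under the image of $H_{\lambda,m}$ therefore forces some conjugate $\psi_\beta^{g_0}$ to be trivial on $H_{\lambda,m}\cap G(\frak{p}^l/\frak{p}^r)$.

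The crux is then to show that once $\max_{i>j}(a_j-a_i)$ exceeds an explicit bound depending on $m$ and $r$, no conjugate of $\psi_\beta$ can vanish on $H_{\lambda,m}\cap G(\frak{p}^l/\frak{p}^r)$. This is where the smooth regularity of $\beta$ is decisive: because $O_F[\beta]=O_K$ is a maximal commutative $O_F$-subalgebra of $M_n(O_F)$, the trace pairing $X\mapsto\text{\rm tr}(X\beta)$ is non-degenerate on $\frak{g}(O_F/\frak{p}^{l^{\prime}})$ in every off-diagonal direction made available by an unrestricted $(i,j)$-entry of $H_{\lambda,m}$. Choosing $X$ supported at such an entry produces $\tau(\varpi^{-l^{\prime}}\text{\rm tr}(X\beta))\neq 1$, and because $\beta$ is elliptic (its residual minimal polynomial coincides with its characteristic polynomial), this non-vanishing survives every conjugation by $G(O_F/\frak{p}^r)$.

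The main obstacle is the valuation bookkeeping in the ramified case $e=e(K/F)>1$: the prime $\varpi_K$ has normalized valuation $1/e<1$ relative to $\varpi$, so the character $\tau(\varpi^{-l^{\prime}}\text{\rm tr}(X\beta))$ only separates every relevant direction at the level of $\frak{p}^{l^{\prime}}$ when $l^{\prime}=\lfloor r/2\rfloor\geq 2$, i.e.\ $r\geq 4$. In the unramified case the reduction of $\beta$ already generates $O_K/\frak{p}_K$ over $\Bbb F$, the trace pairing is non-degenerate modulo $\frak p$ outright, and the argument runs for every $r\geq 2$; this is precisely the dichotomy of the hypothesis.
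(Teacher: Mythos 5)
Your reduction is the same as the paper's: Mackey decomposition of $\text{\rm ind}_{G(O_F)}^{G(F)}\delta$ over a compact open subgroup, Cartan decomposition to index the double cosets by $\lambda$, and Clifford theory to convert a fixed vector into the triviality of some conjugate $g_0\ast\psi_{\beta}$ on the unipotent entries freed up by a large gap in $\lambda$; bounding the consecutive gaps then leaves only finitely many contributing $\lambda$. The problem is your crux step. Triviality of $g_0\ast\psi_{\beta}$ on the one-parameter subgroup $1+\varpi^l b E_{ij}$, $b\in O_F$, says only that the single entry $(g_0\beta g_0^{-1})_{ji}\equiv 0\pmod{\frak{p}^{l^{\prime}}}$, and your claimed mechanism for ruling this out is false: there is no ``non-degeneracy of $X\mapsto\text{\rm tr}(X\beta)$ in every off-diagonal direction,'' since $\text{\rm tr}(E_{ij}\beta)=\beta_{ji}$ and the regular-representation matrix of $\beta$ typically has many zero entries; nor does ellipticity make ``non-vanishing at a fixed matrix position'' survive conjugation by $G(O_F/\frak{p}^r)$. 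So the decisive implication is asserted, not proved, and as stated it is not true.

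What does work (and is what the paper does) is a block statement rather than an entry statement: if some consecutive gap of $\lambda$ is large, the \emph{entire} unipotent radical $U_i(O_F)$ (or its opposite, in your normalization) lies in the intersection subgroup, and triviality of $g_0\ast\psi_{\beta}$ on all of $U_i(\frak{p}^l)$ forces $g_0\beta g_0^{-1}$ to be block upper triangular modulo $\frak{p}^{l^{\prime}}$, hence
$\chi_{\beta}(t)\equiv\det(t1_i-A)\cdot\det(t1_{n-i}-D)\pmod{\frak{p}^{l^{\prime}}}$
with $0<i<n$. The contradiction then comes from Shintani's Proposition \ref{prop:properties-of-chracteristic-polynomial-of-beta}: when $K/F$ is unramified, $\chi_{\beta}\bmod\frak{p}$ is irreducible (degree-$n$ since $e=1$), so any $l^{\prime}\geq 1$ suffices; when $K/F$ is ramified, $\chi_{\beta}\bmod\frak{p}=p(t)^e$ is reducible and no contradiction is available modulo $\frak{p}$, so one must use the irreducibility of $\chi_{\beta}\bmod\frak{p}^2$, which requires $l^{\prime}=[r/2]\geq 2$, i.e.\ $r\geq 4$. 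Your closing explanation of the dichotomy in terms of ``valuation bookkeeping'' for $\varpi_K$ misses this actual input; the hypothesis is exactly what lets one quote statement 2) in the unramified case and statement 3) in the ramified case. With the entry-wise claim replaced by this block/characteristic-polynomial argument, your outline becomes the paper's proof.
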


\begin{prop}\label{prop:irreduciblity-of-compactly-induced-rep}
If $r\geq 2e$, then $\text{\rm ind}_{G(O_F)}^{G(F)}\delta$ is
irreducible representation of $G(F)$.
\end{prop}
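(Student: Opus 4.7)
The plan is to invoke the Mackey irreducibility criterion for compactly induced representations from an open compact subgroup: since $\delta$ is irreducible, $\text{\rm ind}_{G(O_F)}^{G(F)}\delta$ is irreducible if and only if, for every $g\in G(F)\setminus G(O_F)$,
$$
 \text{\rm Hom}_{G(O_F)\cap gG(O_F)g^{-1}}(\delta,\delta^g)=0,
$$
where $\delta^g(y)=\delta(g^{-1}yg)$. By the Cartan decomposition of $SL_n(F)$, it suffices to consider representatives of the form $g=\text{\rm diag}(\varpi^{a_1},\ldots,\varpi^{a_n})$ with $a_1\leq\cdots\leq a_n$, $\sum_ia_i=0$ and $a_1<a_n$.

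I would then restrict a hypothetical nonzero intertwiner to the $l$-th principal congruence subgroup $K_l\subset G(O_F)$, i.e., the pre-image of $G(\frak{p}^l/\frak{p}^r)$. Because $r=l+l^{\prime}$ with $l^{\prime}\leq l$, the quotient $K_l/K_r\cong\frak{g}(O_F/\frak{p}^{l^{\prime}})$ is abelian, so $\delta|_{K_l}$ decomposes into characters, each $G(O_F)$-conjugate to $\psi_{\beta}$ by the parametrization theory of \cite{Takase2019}. Matching the $\psi_{\beta}$-isotypic component on one side with the $\psi_{g\beta g^{-1}}$-isotypic component coming from $\delta^g$ on the intersection $K_l\cap gK_lg^{-1}$ forces, after replacing $\beta$ by a suitable $G(O_F)$-conjugate $\beta^{\prime}=h^{-1}\beta h$, the congruence
$$
 g^{-1}\beta^{\prime}g\equiv\beta^{\prime}\npmod{\varpi^{l^{\prime}}\frak{g}(O_F)};
$$
non-degeneracy of the trace form on $\frak{g}(O_F)$, valid because $p\nmid n$, is what lets one pass from a character identity to this matrix congruence.

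The main obstacle, and the only place the hypothesis $r\geq 2e$ (equivalently $l^{\prime}\geq e$) is used, lies in upgrading this approximate centralizer condition to the exact statement that $g$ centralizes some $G(O_F)$-conjugate of $\beta$. Since $\beta^{\prime}$ generates $O_K$ as an $O_F$-algebra and $K/F$ is tamely ramified of ramification index $e$, a Hensel-type lifting argument, in the spirit of Shintani \cite{Shintani1968}, shows that $l^{\prime}\geq e$ provides exactly the slack needed to produce $h_0\in G(O_F)$ with $h_0^{-1}\beta^{\prime}h_0=g^{-1}\beta^{\prime}g$. Then $gh_0^{-1}$ lies in the centralizer of $\beta^{\prime}$ in $G(F)$, which is the norm-one torus $K^1=\ker N_{K/F}|_{K^{\times}}$. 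Since $v_F\circ N_{K/F}=f\cdot v_K$, the torus $K^1$ is contained in $O_K^{\times}$ and therefore in $G(O_F)$, forcing $g\in G(O_F)$ and contradicting our reduction. The calibration of $l^{\prime}$ against the tame ramification index is the technical heart of the argument; everything else is formal Mackey theory and Clifford-theoretic bookkeeping for $\delta|_{K_l}$.
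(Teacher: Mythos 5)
Your overall architecture is the same as the paper's: the paper proves a multiplicity-one/minimal $K$-type statement (Proposition \ref{prop:minimal-k-type-of-induced-rep}) by a Mackey analysis over the Cartan double cosets $G(O_F)\varpi^mG(O_F)$ and then runs the standard submodule/quotient argument (this is exactly the content of the ``Mackey irreducibility criterion'' you invoke, so citing it as a black box is harmless here, $SL_n$ having compact center). The problem is that the computational heart of that Mackey analysis is missing from your proposal, and the step you substitute for it is not valid as stated. From nonvanishing of $\text{\rm Hom}_{G(O_F)\cap gG(O_F)g^{-1}}(\delta,\delta^g)$ with $g=\varpi^m$ you claim the congruence $g^{-1}\beta'g\equiv\beta'\npmod{\varpi^{l'}\frak{g}(O_F)}$ ``by non-degeneracy of the trace form.'' But the character identity you get from Clifford theory holds only on the intersection $K_l\cap gK_lg^{-1}$ (more precisely on a congruence subgroup small enough that its $g$-conjugate still lies in level $l$), not on all of $K_l$; dualizing over that smaller lattice gives only a weak, entry-weighted congruence depending on the gaps $m_i-m_{i+1}$, and in particular does not by itself even give integrality of $g^{-1}\beta'g$. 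The paper first has to bound the double coset: it restricts $\psi_{\text{\rm Ad}(g)\beta}$ to the unipotent radicals $U_i(\frak{p}^{r-2})$ to show via Proposition \ref{prop:properties-of-chracteristic-polynomial-of-beta} (irreducibility of $\chi_\beta$ mod $\frak p$ and mod $\frak{p}^2$) that $m_i-m_{i+1}\leq 1$, then uses 2) of that proposition to force $i\equiv 0\npmod f$ whenever $m_i-m_{i+1}=1$, whence $m_1-m_n<e\leq l'$; only then can one compare characters on $G(\frak{p}^{l+m_1-m_n})$ and extract a congruence (merely mod $\frak p$), which already suffices for integrality of $\varpi^mg\beta g^{-1}\varpi^{-m}$. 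This is also where the hypothesis $r\geq 2e$ (and $r\geq 4$ when $e>1$) is genuinely used, contrary to your claim that it only enters in the lifting step.

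Your endgame is closer to the mark but mislabeled: once $g^{-1}\beta'g$ is known to be integral, no Hensel-type lifting with ``slack $l'\geq e$'' is needed — Shintani's rigidity (Remark \ref{rem:implication-of-property-of-characteristic-polynomial-of-beta}, parts 2) and 3)) gives an exact conjugator $g'\in GL_n(O_F)$ (note: in $GL_n(O_F)$, not $SL_n(O_F)$ as you assert; the paper then uses $N_{K/F}(g'^{-1}\varpi^mg)=\det(g'^{-1}\varpi^mg)\in O_F^{\times}$ to land in $O_K^{\times}\subset GL_n(O_F)$ and conclude $m=0$, which repairs that point). So the proposal as written has a genuine gap: the reduction from nonzero intertwining to the mod-$\varpi^{l'}$ centralizer congruence is unobtainable without the unipotent-restriction/characteristic-polynomial argument, and without it the rest of the chain does not start.
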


We will prove these two propositions in the following subsections.

\subsection[]{}
The field extension $K/F$ is tamely ramified. Fix a prime element 
$\varpi_K\in O_K$ such that $\varpi_K^e\in O_{K_0}$ where $e$ is the
ramification index of $K/F$ and $K_0$ is the maximal unramified
subextension of $K/F$. The field $K$ is identified with a
$F$-subalgebra of the matrix algebra $M_n(F)$ by means of the regular
representation with respect to an $O_F$-basis of $O_K$. 

Take a generator $\beta$ of $O_K$ as an $O_F$-algebra, and let us
denote by $\chi_{\beta}(t)\in O_F[t]$ the characteristic polynomial of 
$\beta\in O_K\subset M_n(O_F)$. Then Shintani \cite{Shintani1968}
shows the following proposition

\begin{prop}
\label{prop:properties-of-chracteristic-polynomial-of-beta}
\begin{enumerate}
\item $\chi_{\beta}(t)\npmod{\frak p}\in\Bbb F[t]$ is the minimal
      polynomial of $\beta\npmod{\frak p}\in M_n(\Bbb F)$,
\item $\chi_{\beta}(t)\npmod{\frak p}=p(t)^e$ with a polynomial 
      $p(t)\in\Bbb F[t]$ irreducible over $\Bbb F$,
\item $\chi_{\beta}(t)\npmod{\frak{p}^2}$ is an irreducible 
      polynomial over the ring $O_F/\frak{p}^2$.
\end{enumerate}
\end{prop}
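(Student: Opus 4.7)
Since $O_F[\beta]=O_K$ is $O_F$-free of rank $n$, the polynomial $\chi_\beta(t)$ is simultaneously the characteristic and the minimal polynomial of $\beta$ over $F$; moreover, because $K/F$ is tamely ramified, $\mathfrak pO_K=\mathfrak p_K^e$, so $O_K/\mathfrak pO_K=O_K/\mathfrak p_K^e$ is an $\mathbb F$-algebra of dimension $n=ef$.

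For assertion (1), I would observe that $\bar\beta\in M_n(\mathbb F)$ realizes multiplication by $\beta$ on the $n$-dimensional $\mathbb F$-vector space $O_K/\mathfrak p_K^e$, and that reducing $O_F[\beta]=O_K$ modulo $\mathfrak p$ gives $\mathbb F[\bar\beta]=O_K/\mathfrak p_K^e$. Therefore the minimal polynomial of $\bar\beta$ has degree $\ge n$ and must coincide with $\chi_\beta\npmod{\mathfrak p}$. For (2), I would filter $O_K/\mathfrak p_K^e$ by the powers $\mathfrak p_K^j/\mathfrak p_K^e$. Each graded piece $\mathfrak p_K^j/\mathfrak p_K^{j+1}$ is a one-dimensional $O_K/\mathfrak p_K=\mathbb F_{q^f}$-module on which $\bar\beta$ acts by multiplication by $\bar\beta_0:=\beta\npmod{\mathfrak p_K}$; reducing $O_F[\beta]=O_K$ modulo $\mathfrak p_K$ shows $\mathbb F[\bar\beta_0]=\mathbb F_{q^f}$, so the minimal polynomial $p(t)$ of $\bar\beta_0$ over $\mathbb F$ is irreducible of degree $f$. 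Assembling contributions from all $e$ graded pieces yields $\chi_\beta\npmod{\mathfrak p}=p(t)^e$.

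Assertion (3) is the substantive step. When $e=1$ it is immediate: any factorization $\chi_\beta\equiv fg\npmod{\mathfrak p^2}$ with monic factors of positive degree would reduce modulo $\mathfrak p$ to a nontrivial factorization of the irreducible $p(t)$. For $e\ge 2$ I plan a valuation argument. Using Hensel's lemma choose a monic lift $p(t)\in O_F[t]$ of the residue polynomial admitting a root $\beta_0'\in O_{K_0}$; since $p$ is separable, among the factors of $p(\beta)=\prod_j(\beta-\beta_j')$ only the one with $\bar\beta_j'=\bar\beta_0$ lies in $\mathfrak p_K$. The core claim is
$$
 v_K(\beta-\beta_0')=1.
$$
If instead $\beta\equiv\beta_0'\npmod{\mathfrak p_K^2}$, then $\bar\beta\in O_K/\mathfrak p_K^2$ would lie in the image of $O_{K_0}\to O_K/\mathfrak p_K^2$, which has $\mathbb F$-dimension $f$ (since $O_{K_0}\cap\mathfrak p_K^2=\mathfrak p_{K_0}$ for $e\ge 2$); but $\mathbb F[\bar\beta]$ must fill up $O_K/\mathfrak p_K^2$, which has $\mathbb F$-dimension $2f$, a contradiction.

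Given the key identity $v_K(p(\beta))=1$, I finish (3) as follows. Suppose $\chi_\beta\equiv fg\npmod{\mathfrak p^2}$ with $f,g\in O_F[t]$ monic of positive degree. Reducing mod $\mathfrak p$ yields $\bar f=p^a$, $\bar g=p^b$ with $a,b\ge 1$ and $a+b=e$. Writing $f=p^a+\varpi h_1$ and using $v_K(p(\beta)^a)=a<e=v_K(\varpi)$ gives $v_K(f(\beta))=a$, and similarly $v_K(g(\beta))=b$, so $v_K(f(\beta)g(\beta))=e$. Yet $\chi_\beta(\beta)=0$ forces $f(\beta)g(\beta)\in\mathfrak p^2O_K=\mathfrak p_K^{2e}$, demanding $e\ge 2e$, a contradiction. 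The sole genuine obstacle is the dimension-count establishing $v_K(\beta-\beta_0')=1$; once this is in place, parts (1), (2), and the valuation bookkeeping above assemble into a complete proof.
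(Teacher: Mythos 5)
Your argument is correct and complete. The paper itself gives no proof of this proposition --- it simply attributes it to Shintani \cite{Shintani1968} --- so there is no in-paper proof to compare against; but your self-contained argument is sound. Parts (1) and (2) follow correctly from the identifications $\mathbb F[\bar\beta]=O_K/\mathfrak p O_K$ (which has $\mathbb F$-dimension $n$, forcing the minimal polynomial to exhaust the degree-$n$ characteristic polynomial) and the $\mathfrak p_K$-adic filtration of $O_K/\mathfrak p_K^e$ with $\bar\beta_0$ acting on each graded piece. For (3), your dimension count establishing $v_K(\beta-\beta_0')=1$ is the right key: if $\beta\equiv\beta_0'\pmod{\mathfrak p_K^2}$ then $\mathbb F[\bar\beta]$ sits inside the image of $O_{K_0}$ in $O_K/\mathfrak p_K^2$, which (using $O_{K_0}\cap\mathfrak p_K^2=\mathfrak p_{K_0}$ and $O_F\cap\mathfrak p_K^2=\mathfrak p$ for $e\geq 2$) has $\mathbb F$-dimension $f$, contradicting that $O_F[\beta]=O_K$ surjects onto the $2f$-dimensional $O_K/\mathfrak p_K^2$. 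The valuation bookkeeping that turns $v_K(p(\beta))=1$ into the non-existence of a monic factorization mod $\mathfrak p^2$ ($v_K(f(\beta)g(\beta))=a+b=e<2e=v_K(\varpi^2)$) is exactly right, and the separate treatment of $e=1$ is needed since the claim $v_K(\beta-\beta_0')=1$ fails there. One small remark: your final paragraph's case $e\geq 2$ argument in fact also handles $e=1$ vacuously, since $a+b=1$ with $a,b\geq 1$ is impossible, so the case split could be dispensed with; but as written the proof is already complete.
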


\begin{rem}
\label{rem:implication-of-property-of-characteristic-polynomial-of-beta}
The first statement of Proposition
\ref{prop:properties-of-chracteristic-polynomial-of-beta} implies that
\begin{enumerate}
\item for any $m>0$ and $X\in M_n(O_F)$, if 
      $X\beta\equiv\beta X\npmod{\frak{p}^m}$, then there exists a
  polynomial $f(t)\in O_F[t]$ such that 
  $X\equiv f(\beta)\npmod{\frak{p}^m}$,
\item $\{X\in M_n(O_F)\mid X\beta=\beta X\}=O_K$, 
\item for any $X\in M_n(O_F)$, if $\chi_X(t)=\chi_{\beta}(t)$, then 
      there exists $g\in GL_n(O_F)$ such that $X=g\beta g^{-1}$.
\end{enumerate}
\end{rem}

We have the Cartan decomposition 
\begin{equation}
 G(F)=\bigsqcup_{m\in\Bbb M}G(O_F)\varpi^mG(O_F)
\label{eq:cartan-decomposition-of-sl(n)}
\end{equation}
where 
$$
 \Bbb M
 =\left\{m=(m_1,m_2,\cdots,m_n)\in\Bbb Z^n\biggm|
   \begin{array}{l}
    m_1\geq m_2\geq\cdots\geq m_n,\\
    m_1+m_2+\cdots+m_n=0
   \end{array}\right\}
$$
and
$$
 \varpi^m=\begin{bmatrix}
           \varpi^{m_1}&      &            \\
                       &\ddots&            \\
                       &      &\varpi^{m_n}
          \end{bmatrix}
 \;\text{\rm for}\;
 m=(m_1,\cdots,m_n)\in\Bbb M.
$$
Since the restriction to $G(\frak{p}^l/\frak{p}^r)$ of 
our irreducible representation $\delta$ of $G(O_F/\frak{p}^r)$
contains the character $\psi_{\beta}$, we have
\begin{equation}
 \delta|_{G(\frak{p}^l/\frak{p}^r)}
 =\left(\bigoplus_{\dot g}g\ast\psi_{\beta}\right)^b
\label{eq:clifford-decomposition-of-induced-rep}
\end{equation}
with some integer $b>0$. Here 
$(g\ast\psi_{\beta})(x)=\psi_{\beta}(g^{-1}xg)$ is the conjugate of
$\psi_{\beta}$ and $\bigoplus_{\dot g}$ is the direct sum over 
$\dot g\in G(O_F/\frak{p}^r)/G(O_F/\frak{p}^r,\psi_{\beta})$ where
\begin{align}
 G(O_F/\frak{p}^r,\psi_{\beta})
 &=\{g\in G(O_F/\frak{p}^r)\mid g\ast\psi_{\beta}=\psi_{\beta}\}
    \nonumber\\
 &=\left\{\overline g\in G(O_F/\frak{p}^r)\mid 
      \text{\rm Ad}(g)\beta\equiv\beta\npmod{\frak{p}^{l^{\prime}}}
          \right\}
\label{eq:isotropy-subgroup-of-psi-beta}
\end{align}
is the isotropy subgroup of $\psi_{\beta}$. The second equality is due
to that fact 
$\overline g\ast\psi_{\beta}=\psi_{\text{\rm Ad}(g)\beta}$ for 
$g\in G(O_F)$.

\subsection[]{}
\label{subsec:proof-of-admissibility-of-induced-rep}
The proof of Proposition
\ref{prop:admissibility-of-compactly-induced-rep}. For any integer
$a>0$, put $G(\frak{p}^a)=G(O_F)\cap(1_n+\varpi^aM_n(O_F))$. We will
prove that the dimension of the space of the $G(\frak{p}^a)$-fixed 
vectors is finite. The Cartan decomposition 
\eqref{eq:cartan-decomposition-of-sl(n)} gives
$$
 G(F)=\bigsqcup_{s\in S}G(\frak{p}^a)sG(O_F)
$$
with 
$$
 S=\{k\varpi^m\mid \dot k\in G(\frak{p}^a)\backslash G(O_F), 
                   m\in\Bbb M\}.
$$
Then we have
$$
 \left.\text{\rm ind}_{G(O_F)}^{G(F)}\delta\right|_{G(\frak{p}^a)}
 =\bigoplus_{s\in S}
  \text{\rm ind}_{G(\frak{p}^a)\cap sG(O_F)s^{-1}}^{G(\frak{p}^a)}
   \delta^s
$$
with $\delta^s(h)=\delta(s^{-1}hs)$ 
($h\in G(\frak{p}^a)\cap sG(O_F)s^{-1}$). 
The Frobenius reciprocity gives
$$
 \text{\rm Hom}_{G(\frak{p}^a)}\left(\text{\bf 1},
    \text{\rm ind}_{G(O_F)}^{G(F)}\delta\right)
  =\bigoplus_{s\in S}\text{\rm Hom}_{s^{-1}G(\frak{p}^a)s\cap G(O_F)}
      (\text{\bf 1},\delta).
$$
Here $\text{\bf 1}$ is the one-dimensional trivial representation of 
$G(\frak{p}^a)$. If 
$$
 \text{\rm Hom}_{G(\frak{p}^a)}\left(\text{\bf 1},
  \text{\rm ind}_{G(O_F)}^{G(F)}\delta\right)\neq 0
$$
then there exists a 
$$
 s=k\varpi^m\in S
 \quad
 (k\in G(O_F), m=(m_1,\cdots,m_n)\in\Bbb M)
$$ 
such that 
$\text{\rm Hom}_{s^{-1}G(\frak{p}^a)s\cap G(O_F)}(\text{\bf 1},
  \delta)\neq 0$. 
If 
$$
 \text{\rm Max}\{m_i-m_{i+1}\mid 1\leq i<n\}=m_i-m_{i+1}\geq a
$$
then $\varpi^mU_i(O_F)\varpi^{-m}\subset G(\frak{p}^a)$ where
$$
 U_i=\left\{\begin{bmatrix}
             1_i&B\\
              0 &1_{n-1}
            \end{bmatrix}\biggm| B\in M_{i,n-i}\right\}
$$
is the unipotent part of the maximal parabolic subgroup
$$
 P_i=\left\{\begin{bmatrix}
             A&B\\
             0&D
            \end{bmatrix}\in G\biggm|
              A\in GL_i, D\in GL_{n-1}\right\}.
$$
So we have $U_i(O_F)\subset s^{-1}G(\frak{p}^a)s\cap G(O_F)$ so that
$$
 \text{\rm Hom}_{U_i(\frak{p}^l)}(\text{\bf 1},\delta)
 \supset
 \text{\rm Hom}_{s^{-1}G(\frak{p}^a)s\cap G(O_F)}
   (\text{\bf 1},\delta)\neq 0
$$
where $U_i(\frak{p}^l)=U_i(O_F)\cap G(\frak{p}^l)$. Then the
decomposition \eqref{eq:clifford-decomposition-of-induced-rep} implies that
there exists a $g\in G(O_F)$ such that 
$\psi_{\text{\rm Ad}(g)\beta}(h)=1$ for all $h\in U_i(\frak{p}^l)$,
that is
$$
 \tau\left(\varpi^{-l^{\prime}}
   \text{\rm tr}(g\beta g^{-1}\begin{bmatrix}
                               0&B\\
                               0&0
                              \end{bmatrix})\right)=0
$$
for all $B\in M_{i,n-1}(O_F)$. This means 
$$
 g\beta g^{-1}\equiv\begin{bmatrix}
                     A&\ast\\
                     0&D
                    \end{bmatrix}\npmod{\frak{p}^{l^{\prime}}}
$$
with $A\in M_i(O_F), D\in M_{n-i}(O_F)$, that is
$$
 \chi_{\beta}(t)\equiv\det(t1_i-A)\cdot\det(t1_{n-i}-D)
  \npmod{\frak{p}^{l^{\prime}}}.
$$
If $K/F$ is unramified, this is contradict against 2) of Proposition 
\ref{prop:properties-of-chracteristic-polynomial-of-beta}. If
$K/F$ is ramified, then $r\geq 4$ and $l^{\prime}\geq 2$ and a
contradiction to 3) of the proposition. So we have 
$$
 \text{\rm Max}\{m_i-m_{i+1}\mid 1\leq i<n\}<a.
$$
This implies that the number of $s\in S$ such that 
$\text{\rm Hom}_{s^{-1}G(\frak{p}^a)s\cap G(O_F)}
  (\text{\bf 1},\delta)\neq 0$ is finite, and then
$$
 \dim_{\Bbb C}\text{\rm Hom}_{G(\frak{p}^a)}\left(
   \text{\bf 1},\text{\rm ind}_{G(O_F)}^{G(F)}\delta\right)
 <\infty.
$$
This proves Proposition
\ref{prop:admissibility-of-compactly-induced-rep}.

\subsection[]{}
In this subsection, we will prove Proposition 
\ref{prop:irreduciblity-of-compactly-induced-rep}. To begin with, we
will prove the following proposition

\begin{prop}\label{prop:minimal-k-type-of-induced-rep}
If $r\geq 2e$, then
\begin{enumerate}
\item $\dim_{\Bbb C}\text{\rm Hom}_{G(O_F)}\left(
        \delta,\text{\rm ind}_{G(O_F)}^{G(F)}\delta\right)=1$,
\item if $\Delta$ is an irreducible representation of $G(O_F)$ which
  factors through the canonical surjection $G(O_F)\to
  G(O_F/\frak{p}^r)$ such that
$$
 \text{\rm Hom}_{G(O_F)}\left(
   \Delta,\text{\rm ind}_{G(O_F)}^{G(F)}\delta\right)\neq 0,
$$
  then $\Delta=\delta$.
\end{enumerate}
\end{prop}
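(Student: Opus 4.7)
The approach is to combine Mackey's formula with the Cartan decomposition \eqref{eq:cartan-decomposition-of-sl(n)} to obtain
$$
\text{\rm Hom}_{G(O_F)}\!\left(\Delta,\,\text{\rm ind}_{G(O_F)}^{G(F)}\delta\right)
\cong\bigoplus_{m\in\Bbb M}\text{\rm Hom}_{H_m}\!\left(\Delta,\,\delta^{(m)}\right),
$$
where $H_m=G(O_F)\cap\varpi^m G(O_F)\varpi^{-m}$ and $\delta^{(m)}(h)=\delta(\varpi^{-m}h\varpi^m)$. The $m=0$ summand is $\text{\rm Hom}_{G(O_F)}(\Delta,\delta)$, which by Schur's lemma on the finite group $G(O_F/\frak p^r)$ is $\Bbb C$ when $\Delta\cong\delta$ and $0$ otherwise. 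Both statements of the proposition therefore reduce to the single claim that $\text{\rm Hom}_{H_m}(\Delta,\delta^{(m)})=0$ for every $m\in\Bbb M\setminus\{0\}$.

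To prove this vanishing I would restrict further to the subgroup $J_m=G(\frak p^l)\cap\varpi^m G(\frak p^l)\varpi^{-m}\subset H_m$. Since $2l\geq r$, $J_m$ is abelian modulo $G(\frak p^r)$, so both $\Delta|_{J_m}$ and $\delta^{(m)}|_{J_m}$ decompose as direct sums of additive characters. By \eqref{eq:clifford-decomposition-of-induced-rep} together with the identity $\overline g\ast\psi_\beta=\psi_{\text{\rm Ad}(g)\beta}$, the characters in $\delta^{(m)}|_{J_m}$ are, after the substitution $Y=\varpi^{-m}X\varpi^m$, parametrized by the matrices $\text{\rm Ad}(g)\beta$ for $g\in G(O_F)$. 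A nonzero intertwining with some character $\psi_\gamma$ of $\Delta|_{G(\frak p^l)}$ translates into the congruence
$$
\text{\rm tr}\!\left(Y\bigl(\text{\rm Ad}(g)\beta-\varpi^{-m}\gamma\varpi^m\bigr)\right)\in\frak p^{l'}
\quad\text{\rm for all }Y\in\{Y\in\frak g(O_F):Y_{jk}\in\frak p^{\max(0,m_k-m_j)}\}.
$$
Pairing this equation against matrix units (and using $p\nmid n$ with the trace-zero condition to control the diagonal) forces, with respect to the partition $\{1,\ldots,n\}=B_1\sqcup\cdots\sqcup B_k$ by the level sets of $m$, the off-diagonal blocks $(\text{\rm Ad}(g)\beta)_{B_bB_a}\in\frak p^{\min(l',\,m_{i_a}-m_{i_b})}$ for $b>a$, where $i_s$ is the last index of $B_s$. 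Consequently $\chi_\beta(t)=\chi_{\text{\rm Ad}(g)\beta}(t)$ admits a nontrivial factorization modulo a power of $\frak p$ controlled by the jumps of $m$ and by $l'$.

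In the unramified case $(e=1)$, any $m\neq 0$ yields a factorization of $\chi_\beta$ modulo $\frak p$ into factors of degrees $|B_1|,\ldots,|B_k|$, directly contradicting Proposition \ref{prop:properties-of-chracteristic-polynomial-of-beta}(2). The main obstacle is the ramified case, where $\chi_\beta\equiv p(t)^e\npmod{\frak p}$ is already reducible and one needs a factorization modulo $\frak p^2$ to contradict Proposition \ref{prop:properties-of-chracteristic-polynomial-of-beta}(3). This is where the hypothesis $r\geq 2e$ enters, providing $l'\geq e\geq 2$: when some consecutive jump $d_s=m_{i_s}-m_{i_s+1}$ satisfies $d_s\geq 2$, the two-block split at position $i_s$ already yields the desired factorization modulo $\frak p^2$. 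The delicate subcase is when every consecutive jump of $m$ is $0$ or $1$ (e.g.\ $m=(1,0,\ldots,0,-1)$); then one must exploit the strictly stronger vanishings on distant pairs $(p,q)$, where $(\text{\rm Ad}(g)\beta)_{pq}\in\frak p^{\min(l',\,m_q-m_p)}$ with $m_q-m_p\geq 2$, combined with the diagonal matching of $\text{\rm Ad}(g)\beta$ to $\gamma$ modulo $\frak p^{l'}$, and carry out a cofactor expansion of $\det(tI_n-\text{\rm Ad}(g)\beta)$ tracking the small off-diagonal contributions to extract the factorization modulo $\frak p^2$. This last step is the most delicate part of the argument.
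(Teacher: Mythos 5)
Your reduction via the Cartan decomposition and Mackey/Frobenius reciprocity is exactly the paper's, and your treatment of the easy cases agrees with it: a jump $m_i-m_{i+1}\geq 2$ makes $\text{\rm Ad}(g)\beta$ block upper triangular modulo $\frak p^{\min(l',2)}$, so $\chi_\beta$ factors modulo $\frak p$ (unramified) or $\frak p^2$ (ramified, using $l'\geq e\geq 2$), contradicting Proposition \ref{prop:properties-of-chracteristic-polynomial-of-beta}. The genuine gap is your last step, the case where all consecutive jumps are $0$ or $1$, which you yourself flag as "most delicate" and do not carry out. There the entry-wise congruences you have extracted only say that the first block-subdiagonal of $\text{\rm Ad}(g)\beta$ lies in $\frak p$ and the deeper subdiagonals in $\frak p^{\min(l',s)}$; modulo $\frak p^2$ the matrix is \emph{not} block triangular, $\det(t1_n-\text{\rm Ad}(g)\beta)$ picks up first-order contributions from the $\frak p$-level subdiagonal blocks, and no factorization of $\chi_\beta$ modulo $\frak p^2$ follows. (In the totally ramified $3\times 3$ case one can still get a contradiction from a single coefficient, e.g. the constant term, but that uses $p(0)=0$ and does not generalize to $f\geq 2$; "diagonal matching with $\gamma$" gives no constraint on $\text{\rm Ad}(g)\beta$ either, since $\gamma$ is arbitrary.) So the proposed cofactor expansion is not a proof, and it is doubtful it can be repaired in the form stated.

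The paper closes this case by a different mechanism, and you are missing both of its ingredients. First, from the mod-$\frak p$ factorization and Proposition \ref{prop:properties-of-chracteristic-polynomial-of-beta}(2) it deduces that jumps can occur only at indices divisible by $f$, hence $m_1-m_n\leq e-1<e\leq l'$; without this bound your configurations with many unit jumps (up to $n-1$ of them) are not controlled at all. Second, once $m_1-m_n\leq l'$, the congruences (which your $J_m$-analysis in fact already yields, since $A_{pq}\in\frak p^{\min(l',\,m_q-m_p)}=\frak p^{m_q-m_p}$) show that $\varpi^m g\beta g^{-1}\varpi^{-m}\in M_n(O_F)$; the paper then invokes the rigidity of $\beta$ from Remark \ref{rem:implication-of-property-of-characteristic-polynomial-of-beta}: an integral matrix with characteristic polynomial $\chi_\beta$ is $GL_n(O_F)$-conjugate to $\beta$ and the centralizer of $\beta$ is $O_K$, so $g'^{-1}\varpi^m g\in K^{\times}$ has unit norm, lies in $O_K^{\times}$, and forces $m=0$. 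This conjugacy/centralizer/norm argument, not a characteristic-polynomial factorization modulo $\frak p^2$, is what finishes the proof; your proposal would need to import it (or a genuine substitute) to be complete.
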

\begin{proof}
Cartan decomposition \eqref{eq:cartan-decomposition-of-sl(n)} gives
$$
 \left.\text{\rm ind}_{G(O_F)}^{G(F)}\delta\right|_{G(O_F)}
 =\bigoplus_{m\in\Bbb M}
   \text{\rm ind}_{G(O_F)\cap\varpi^mG(O_F)\varpi^{-m}}^{G(O_F)}
    \delta^{\varpi^m}.
$$
Then Frobenius reciprocity gives
$$
 \text{\rm Hom}_{G(O_F)}\left(\Delta,
  \text{\rm ind}_{G(O_F)}^{G(F)}\delta\right)
 =\bigoplus_{m\in\Bbb M}
   \text{\rm Hom}_{\varpi^{-m}G(O_F)\varpi^m\cap G(O_F)}
    \left(\Delta^{\varpi^{-m}},\delta\right).
$$
Now take a $m=(m_1,\cdots,m_n)\in\Bbb M$ such that
$$
 \text{\rm Hom}_{\varpi^{-m}G(O_F)\varpi^m\cap G(O_F)}
    \left(\Delta^{\varpi^{-m}},\delta\right)\neq 0.
$$
Suppose 
$\text{\rm Max}\{m_i-m_{i+1}\mid 1\leq i<n\}=m_i-m_{i+1}\geq 2$. Then 
$$
 U_i(O_F)=\varpi^{-m}U_i(O_F)\varpi^m\cap U_i(O_F)
 \subset\varpi^{-m}G(O_F)\varpi^m\cap G(O_F)
$$
and we have
$$
 \text{\rm Hom}_{U_i(\frak{p}^{r-2})}
  \left(\Delta^{\varpi^{-m}},\delta\right)
 \supset
 \text{\rm Hom}_{\varpi^{-m}G(O_F)\varpi^m\cap G(O_F)}
  \left(\Delta^{\varpi^{-m}},\delta\right)\neq 0.
$$
Because $G(\frak{p}^r)\subset\text{\rm Ker}\,\Delta$ and 
$\varpi^mU_i(\frak{p}^{r-2})\varpi^{-m}\subset U_i(\frak{p}^r)$, we have 
$$
 \text{\rm Hom}_{U_i(\frak{p}^{r-k})}(\text{\bf 1},\delta)
 \supset
 \text{\rm Hom}_{U_i(\frak{p}^{r-2})}(\text{\bf 1},\delta)\neq 0.
$$
Here $\text{\bf 1}$ is the trivial one-dimensional representation of
$U_i(\frak{p}^{r-2})$. On the other hand $r\geq 2e$ implies 
$r-e\geq l$ and $U_i(\frak{p}^{r-2})\subset G(\frak{p}^l)$. Then due
to the decomposition \eqref{eq:clifford-decomposition-of-induced-rep},
there exists a $g\in G(O_F)$ such that 
$\psi_{\text{\rm Ad}(g)\beta}(h)=1$ for all $h\in U_i(\frak{p}^{r-k})$
with
$$
 k=\begin{cases}
    1&: e=1,\\
    2&: e>1.
   \end{cases}
$$
This means 
$$
 g\beta g^{-1}\equiv\begin{bmatrix}
                     A&\ast\\
                     0&D
                    \end{bmatrix}\npmod{\frak{p}^k}
$$
with some $A\in M_i(O_F)$ and $D\in M_{n-i}(O_F)$. Then
$$
 \chi_{\beta}(t)\equiv
  \det(t1_i-A)\cdot\det(t1_{n-i}-D)\npmod{\frak{p}^k}
$$
which contradicts to Proposition
\ref{prop:properties-of-chracteristic-polynomial-of-beta}. So, if 
$$
 \text{\rm Hom}_{\varpi^{-m}G(O_F)\varpi^m\cap G(O_F)}
  \left(\Delta^{\varpi^{-m}},\delta\right)\neq 0
$$
then $0\leq m_i-m_{i+1}\leq 1$ for all $1\leq i<n$. If there exists
$1\leq i<n$ such that $m_i-m_{i+1}=1$, then we have as above 
$$
 g\beta g^{-1}\equiv\begin{bmatrix}
                     A&\ast\\
                     0&D
                    \end{bmatrix}\npmod{\frak p}
$$
with some $g\in G(O_F)$, $A\in M_i(O_F)$ and $D\in M_{n-i}(O_F)$, and
hence
$$
 \chi_{\beta}(t)\equiv
  \det(t1_i-A)\cdot\det(t1_{n-i}=D)\npmod{\frak p}.
$$
Then 2) of  Proposition
\ref{prop:properties-of-chracteristic-polynomial-of-beta} implies that 
$i=\deg\det(t1_i-A)$ is a multiple of $f$. So we have 
$m_1-m_n<e\leq l^{\prime}$. We can take a $\gamma\in\frak{g}(O_F)$
such that
$$
 \Delta|_{G(\frak{p}^l/\frak{p}^r)}
 =\left(\bigoplus_{\dot h}h\ast\psi_{\gamma}\right)^c
$$
with some integer $c>0$. Here $\bigoplus_{\dot h}$ is the direct sum 
over $\dot h\in
G(O_F/\frak{p}^r)/G(O_F/\frak{p}^r,\psi_{\gamma})$. Then we have
$$
 \bigoplus_{\dot h}\bigoplus_{\dot g}
  \text{\rm Hom}_{G(\frak{p}^{l+m_1-m_n})}
   \left((h\ast\psi_{\gamma})^{\varpi^{-m}},g\ast\psi_{\beta}\right)
 \neq 0
$$
because 
$G(\frak{p}^{l+m_1-m_n})\subset\varpi^{-m}G(O_F)\varpi^m\cap G(O_F)$
and 
$\varpi^mG(\frak{p}^{l+m_1-m_n})\varpi^{-m}\subset
G(\frak{p}^l)$. Hence there exist $g,h\in G(O_F)$ such that
$$
 \psi_{\text{\rm Ad}(g)\beta}(x)
 =\psi_{\text{\rm Ad}(h)\gamma}(\varpi^mx\varpi^{-m})
$$
for all $x\in G(\frak{p}^{l+m_1-m_n})$. This means 
$$
 g\beta g^{-1}\equiv\varpi^{-m}h\gamma h^{-1}\varpi^m
 \npmod{\frak p},
$$
that is $\varpi^mg\beta g^{-1}\varpi^{-m}\in M_n(O_F)$. Then there
exists a $g^{\prime}\in GL_n(O_F)$ such that 
$\varpi^mg\beta g^{-1}\varpi^{-m}=g^{\prime}\beta g^{\prime -1}$ and
hence $g^{\prime -1}\varpi^mg\in K$ due to 2) and 3) of Remark
\ref{rem:implication-of-property-of-characteristic-polynomial-of-beta}. 
On the other hand we have 
$$
 N_{K/F}(g^{\prime -1}\varpi^mg)
 =\det(g^{\prime -1}\varpi^mg)\in O_F^{\times}
$$
so that 
$g^{\prime -1}\varpi^mg\in O_K^{\times}\subset GL_n(O_F)$. Hence 
$m=(0,\cdots,0)$. Now we have proved
$$
 \text{\rm Hom}_{G(O_F)}
  \left(\Delta,\text{\rm ind}_{G(O_F)}^{G(F)}\delta\right)
 =\text{\rm Hom}_{G(O_F)}(\Delta,\delta)
$$
which implies the two statements of the proposition.
\end{proof}

The proof of Proposition
\ref{prop:irreduciblity-of-compactly-induced-rep}. Let 
$W\subset\text{\rm ind}_{G(O_F)}^{G(F)}\delta$ be a non-trivial
$G(F)$-stable subspace. Then we have by Frobanius reciprocity 
\begin{align*}
 0\neq
 \text{\rm Hom}_{G(F)}
   \left(W,\text{\rm ind}_{G(O_F)}^{G(F)}\delta\right)
 &\subset
  \text{\rm Hom}_{G(O_F)}
   \left(W,\text{\rm ind}_{G(O_F)}^{G(F)}\delta\right)\\
 &=\text{\rm Hom}_{G(O_F)}(W,\delta),
\end{align*}
and hence $W$ contains $\delta$ as $G(O_F)$-module. On the other hand,
also by Frobenius reciprocity, we have
$$
 0\neq\text{\rm Hom}_{G(F)}(V,V/W)
 =\text{\rm Hom}_{G(O_F)}(\delta,V/W)
$$
with $V=\text{\rm ind}_{G(O_F)}^{G(F)}\delta$, and hence $V/W$
contains $\delta$ as a $G(O_F)$-module. Since the admissible
representations of $G(F)$ are semi-simple over the compact subgroup
$G(O_F)$, this means that $\text{\rm ind}_{G(O_F)}^{G(F)}\delta$
contains $\delta$ with multiplicity as least two, which contradicts to
1) of Proposition \ref{prop:minimal-k-type-of-induced-rep}. So the
$G(F)$-stable subspace of $\text{\rm ind}_{G(O_F)}^{G(F)}\delta$ is 
trivial. 

\section{Formal degree of supercuspidal representations}
\label{sec:formal-degree-of-supercuspidal-representation}

In this section, we will prove Proposition
\ref{prop:dimension-of-delta}. 

\subsection[]{}
\label{subsec:explicit-description-of-delta}
We assume that $n$ is prime to $p$ so that
\begin{itemize}
\item[I)] the trace form
$$
 \frak{g}(\Bbb F)\times\frak{g}(\Bbb F)\to\Bbb F
 \quad
 ((X,Y)\mapsto\text{\rm tr}(XY))
$$
          is non-degenerate.
\end{itemize}
For any $X\in M_n(O_F)$, we have
$$
 \det(1_n+\varpi^lX)\equiv 1+\varpi^l\text{\rm tr}\,X
  \npmod{\frak{p}^{2l}}.
$$
On the other hand we have 
$$
 \det g\equiv
 1+\varpi^{l-1}\text{\rm tr}\,X
  +2^{-1}\varpi^{2l-2}\left(\text{\rm tr}\,X\right)^2
    \npmod{\frak{p}^{2l-1}}
$$
for $g=1_n+\varpi^{l-1}X+2^{-1}\varpi^{2l-2}X^2\in GL_n(O_F)$ so that
\begin{itemize}
\item[II)] for any $r=l+l^{\prime}$ with $0<l^{\prime}\leq l$, we have
  a group isomorphism
$$
 \frak{g}(O_F/\frak{p}^{l^{\prime}})\,\tilde{\to}\,
 G(\frak{p}^l/\frak{p}^r)
 \quad
 (X\npmod{\frak{p}^{l^{\prime}}}\mapsto 
  1_n+\varpi^lX\npmod{\frak{p}^r}),
$$
\item[III)] if $r=2l-1>1$ is odd, then we have a map
$$
 \frak{g}(O_F)\to G(\frak{p}^{l-1}/\frak{p}^r)
 \quad
 (X\mapsto 1_n+\varpi^{l-1}X+2^{-1}\varpi^{2l-2}X^2\npmod{\frak{p}^r}).
$$
\end{itemize}
Then the general theory developed by \cite{Takase2019} is applicable
to our case. Let us recall the general theory by writing down
explicitly the irreducible 
representation $\delta$ of $G(O_F/\frak{p}^r)$ corresponding to the
character
$$
 \theta:G_{\beta}(O_F/\frak{p}^r)\to\Bbb C^{\times}
$$
such that $\theta=\psi_{\beta}$ on 
$G_{\beta}(O_F/\frak{p}^r)\cap G(\frak{p}^l/\frak{p}^r)$. The general theory
of \cite{Takase2019} says that 
\begin{equation}
 \delta
 =\text{\rm Ind}_{G(O_F/\frak{p}^r,\psi_{\beta})}^{G(O_F/\frak{p}^r)}
   \sigma_{\beta,\theta}
\label{eq:delta-is-induced-from-sigma-beta-theta}
\end{equation}
with an irreducible representation 
$\sigma_{\beta,\theta}$ of the isotropy subgroup 
$G(O_F/\frak{p}^r,\psi_{\beta})$ of $\psi_{\beta}$ such that
\begin{equation}
 \dim\sigma_{\beta,\theta}
 =\begin{cases}
   1&:\text{\rm $r$ is even}, \\
   q^{n(n-1)/2}&:\text{\rm $r$ is odd}.
  \end{cases}
\label{eq:dimension-of-sigma-beta-theta}
\end{equation}
Because the canonical group homomorphism 
$G(O_F/\frak{p}^r)\to G(O_F/\frak{p}^{l^{\prime}})$ is surjective, 
\eqref{eq:isotropy-subgroup-of-psi-beta} implies 
\begin{equation}
 G(O_F/\frak{p}^r,\psi_{\beta})
 =G_{\beta}(O_F/\frak{p}^r)\cdot G(\frak{p}^{l^{\prime}}/\frak{p}^r).
\label{eq:structure-of-isotropy-subgroup}
\end{equation}
If $r=2l$ is even, then $\sigma_{\beta,\theta}$ is defined by
$$
 \sigma_{\beta,\theta}(gh)=\theta(g)\cdot\psi_{\beta}(h)
$$
for $g\in G_{\beta}(O_F/\frak{p}^r)$ and 
$h\in G(\frak{p}^l/\frak{p}^r)$. 

If $r=2l-1$ is odd, then $\sigma_{\beta,\theta}$ is realized on the
complex vector space $L^2(\Bbb W^{\prime})$ of the complex valued
functions on $\Bbb W^{\prime}$ where we fix a polarization 
$\Bbb V_{\beta}=\Bbb W\oplus\Bbb W^{\prime}$ of the symplectic space 
$\Bbb V_{\beta}$ over $\Bbb F$. The first statement of 
Remark
\ref{rem:implication-of-property-of-characteristic-polynomial-of-beta}
implies 
\begin{align*}
 \frak{g}_{\beta}(\Bbb F)
 &=\{X\in\Bbb F[\overline\beta]\subset M_n(\Bbb F)\mid
      \text{\rm tr}\,X=0\}\\
 &=\{\overline X\in O_K/\frak{p}_K^e\mid 
      T_{K/F}(X)\equiv 0\npmod{\frak p}\}
\end{align*}
with $\overline\beta=\beta\npmod{\frak p}\in M_n(\Bbb F)$. Then we
have
$$
 \dim_{\Bbb F}\Bbb V_{\beta}=(n^2-1)-(n-1)=n(n-1)
$$
so that $\dim_{\Bbb C}L^2(\Bbb W^{\prime})=q^{n(n-1)/2}$. Let 
$H_{\beta}$ be the Heisenberg group associated with the symplec space
$\Bbb V_{\beta}$ over $\Bbb F$, that is 
$H_{\beta}=\Bbb V_{\beta}\times\Bbb C^{\times}$ with a group operation 
$$
 (u,s)\cdot(v,t)
 =(u+v,st\widehat{\tau}\left(2^{-1}\langle u,v\rangle_{\beta}\right))
$$
and $\pi_{\beta}$ the Schr\"odinger representation of $H_{\beta}$ on 
$L^2(\Bbb W^{\prime})$, that is
$$
 \left(\pi_{\beta}(u,s)f\right)(w)
 =s\cdot\widehat{\tau}\left(
    2^{-1}\langle u_-,u_+\rangle_{\beta}+\langle w,u_+\rangle_{\beta}
      \right)\cdot f(w+u_-)
$$
for $(u,s)\in H_{\beta}$ and $f\in L^2(\Bbb W^{\prime})$ with 
$u=u_-+u_+$ ($u_-\in\Bbb W^{\prime}, u_+\in\Bbb W$). 

A representation $\pi_{\beta,\theta}$ of 
$G(\frak{p}^{l-1}/\frak{p}^r)$ on $L^2(\Bbb W^{\prime})$ is defined as
follows. 
Take a 
$h=1_n+\varpi^{l-1}T\npmod{\frak{p}^r}
 \in G(\frak{p}^{l-1}/\frak{p}^r)$ with $T\in M_n(O_F)$. 
Then $T\npmod{\frak{p}^{l-1}}\in\frak{g}(O_F/\frak{p}^{l-1})$ and the
image of it under the canonical surjection 
$\frak{g}(O_F/\frak{p}^{l-1})\to\frak{g}(\Bbb F)$ is denoted by 
$\widehat T$. 
Put $v=\widehat T\npmod{\frak{g}_{\beta}(\Bbb F)}\in\Bbb V_{\beta}$
and $Y=\widehat T-[v]\in\frak{g}_{\beta}(\Bbb F)$. Then
$$
 \pi_{\beta,\theta}(h)
 =\tau\left(\varpi^{-l}\text{\rm tr}(T\beta)
            -2^{-1}\varpi^{-1}\text{\rm tr}(T^2\beta)\right)\cdot
  \rho(Y)\cdot\pi_{\beta}(v,1).
$$
Here an additive character 
$\rho:\frak{g}_{\beta}(\Bbb F)\to\Bbb C^{\times}$ is defined as
follows. Since the $O_F$-group scheme $G_{\beta}$ is smooth, the
canonical map $\frak{g}_{\beta}(O_F)\to\frak{g}_{\beta}(\Bbb F)$ is
surjective. So for any $Y\in\frak{g}_{\beta}(\Bbb F)$, we can take a 
$X\in\frak{g}_{\beta}(O_F)$ such that $Y=X\npmod{\frak p}$. Then 
$$
 g=1_n+\varpi^{l-1}X+2^{-1}\varpi^{2l-2}X^2\npmod{\frak{p}^r}
 \in G_{\beta}(\frak{p}^{l-1}/\frak{p}^r)
$$
and we will define
$$
 \rho(Y)
 =\tau\left(-\varpi^{-l}\text{\rm tr}(X\beta)\right)\cdot
  \theta(g).
$$
There exists a group homomorphism 
$U:G_{\beta}(O_F/\frak{p}^r)\to GL_{\Bbb C}(L^2(\Bbb W^{\prime}))$
such that 
$$
 \pi_{\beta,\theta}(g^{-1}hg)
 =U(g)^{-1}\circ\pi_{\beta,\theta}(h)\circ U(g)
$$
for all $g\in G_{\beta}(O_F/\frak{p}^r)$ and 
$h\in G(\frak{p}^{l-1}/\frak{p}^r)$. Now the representation 
$\sigma_{\beta,\theta}$ of 
$$
 G(O_F/\frak{p}^r,\psi_{\beta})
 =G_{\beta}(O_F/\frak{p}^r)\cdot G(\frak{p}^{l-1}/\frak{p}^r)
$$
is defined by
$$
 \sigma_{\beta,\theta}(gh)
 =\theta(g)\cdot U(g)\circ\pi_{\beta,\theta}(h)
$$
for $g\in G_{\beta}(O_F/\frak{p}^r)$ and 
$h\in G(\frak{p}^{l-1}/\frak{p}^r)$. 

\subsection[]{}
\label{subsec:dimension-of-delta}
The proof of Proposition \ref{prop:dimension-of-delta}. By 
\eqref{eq:delta-is-induced-from-sigma-beta-theta}, we have
\begin{equation}
 \dim\delta
 =(G(O_F/\frak{p}^r):G(O_F/\frak{p}^r,\psi_{\beta}))\cdot
  \dim\sigma_{\beta,\theta}.
\label{eq:dim-delta-is-index-times-dim-sigma-beta-theta}
\end{equation}
Because of \eqref{eq:structure-of-isotropy-subgroup}, we have
$$
 |G(O_F/\frak{p}^r,\psi_{\beta})|
 =\frac{|G_{\beta}(O_F/\frak{p})||G(\frak{p}^{l^{\prime}}/\frak{p}^r)|}
       {|G_{\beta}(O_F/\frak{p}^r)\cap G(\frak{p}^{l-1}/\frak{p}^r)|}.
$$
$G(\frak{p}^{l^{\prime}}/\frak{p}^r)$ is the kernel of the canonical
surjection $G(O_F/\frak{p}^r)\to G(O_F/\frak{p}^{l^{\prime}})$, and 
$G_{\beta}(O_F/\frak{p}^r)\cap G(\frak{p}^{l^{\prime}}/\frak{p}^r)$ is
the kernel of the canonical surjection 
$G_{\beta}(O_F/\frak{p}^r)\to
 G_{\beta}(O_F/\frak{p}^{l^{\prime}})$. Hence we have 
\begin{equation}
 (G(O_F/\frak{p}^r):G(O_F/\frak{p}^r,\psi_{\beta}))
 =\frac{|G(O_F/\frak{p}^{l^{\prime}})|}
       {|G_{\beta}(O_F/\frak{p}^{l^{\prime}})|}.
\label{eq:group-index-is-ratio-of-g-and-g-beta}
\end{equation}
We have
\begin{equation}
 |G(O_F/\frak{p}^{l^{\prime}})|
 =q^{l^{\prime}n(n-1)/2}\prod_{k=2}^n(1-q^{-k}).
\label{eq:order-of-sl-n-o-mod-p-l-prime}
\end{equation}
On the other hand $G_{\beta}(O_F/\frak{p}^{l^{\prime}})$ is the kernel
of 
$$
 \left(O_K/\frak{p}_K^{el^{\prime}}\right)^{\times}
 \to
 \left(O_F/\frak{p}^{l^{\prime}}\right)
 \quad
 (\varepsilon\mapsto N_{K/F}(\varepsilon)),
$$
and $1+\frak{p}=N_{K/F}(1+\frak{p}_K)$ 
(see \cite[p.32, Prop.2]{Cassels-Frohlich}). Then we have
\begin{equation}
 |G_{\beta}(O_F/\frak{p}^{l^{\prime}})|
 =(O_F^{\times}:N_{K/F}(O_K^{\times}))\cdot q^{l^{\prime}(n-1)}\cdot
   \frac{1-q^{-f}}
        {1-q^{-1}}.
\label{eq:order-of-g-beta-o-mod-p-l-prime}
\end{equation}
Combining the equations 
\eqref{eq:dimension-of-sigma-beta-theta}, 
\eqref{eq:dim-delta-is-index-times-dim-sigma-beta-theta},
\eqref{eq:group-index-is-ratio-of-g-and-g-beta},
\eqref{eq:order-of-sl-n-o-mod-p-l-prime} and 
\eqref{eq:order-of-g-beta-o-mod-p-l-prime}, the proof of Proposition 
\ref{prop:dimension-of-delta} is completed. 

\section{Induced representations of Weil group}
\label{sec:induced-representation-of-weil-group}

In this section, we will assume that $K/F$ is a tamely ramified 
Galois extension and
will prove Theorem \ref{th:ratio-of-gamma-factor}. 

The algebraic extensions of $F$ are taken within a fixed algebraic
closure $\overline F$ of $F$. Define a group homomorphism
$$
 \nu_F:{\overline F}^{\times}\to\Bbb Q
$$
by $\nu_F(x)=(F(x):F)^{-1}\text{\rm ord}_F(N_{F(x)/F}(x))$ 
($0\neq x\in\overline F$) and put $\nu_F(0)=\infty$. For an algebraic
extension $L/F$, put
$$
 O_L=\{x\in L\mid\nu_F(x)\geq 0\},
 \quad
 \frak{p}_L=\{x\in L\mid\nu_F(x)>0\}.
$$
Then $\Bbb L=O_L/\frak{p}_L$ is an algebraic extension of 
$\Bbb F=O_F/\frak{p}$. 
Let us denote by $F^{\text{\rm sep}}$ the separable closure of $F$.

\subsection[]{}
To begin with, we will recall the definition of the $L$-factor, 
the $\varepsilon$-factor and the $\gamma$-factor of a representation of
a Weil group (or Weil-Deligne group, more precisely). See
\cite{GrossReeder2010} for the details.  

 The Weil group $W_F$ of $F$ is the inverse image by the canonical
restriction mapping 
$$
 \text{\rm Gal}(F^{\text{\rm sep}}/F)\to
 \text{\rm Gal}(F^{\text{\rm ur}}/F)
$$
of the cyclic subgroup 
$\langle\text{\rm Fr}\rangle
 \subset\text{\rm Gal}(F^{\text{\rm ur}}/F)$ generated by the geometric
 Frobanius automorphism $\text{\rm Fr}$ which induces the inverse of
 the Frobanius automorphism in 
$\text{\rm Gal}(\overline{\Bbb F}/\Bbb F)$. Fix an extension 
$\widetilde{\text{\rm Fr}}\in\text{\rm Gal}(F^{\text{\rm sep}}/F)$ of
 $\text{\rm Fr}$. Put
$$
 I_F=\text{\rm Gal}(F^{\text{\rm sep}}/F^{\text{\rm ur}})
$$
which is a normal subgroup of $W_F$ and we have
$W_F=\langle\widetilde{\text{\rm Fr}}\rangle\ltimes I_F$. Weil group
 $W_F$ is endowed with the topology such that $I_F$, with the usual
 Krull topology, is an open compact subgroup of $W_F$. 
 
Take a complex linear algebraic group $\mathcal{G}$ such that its
connected component $\mathcal{G}^o$ is reductive. There is a bijective
correspondence between the conjugacy classes of the triplets 
$(\rho,\,\mathcal{G},N)$ such that
\begin{enumerate}
\item $\rho:W_F\to\mathcal{G}$ is a group homomorphism which is
      continuous on $I_F$, 
\item $\rho(\widetilde{\text{\rm Fr}})\in\mathcal{G}$ is a semi-simple
      element, 
\item $N\in\text{\rm Lie}(\mathcal{G})$ is a nilpotent element such
      that $\rho(\sigma)N=|\sigma|_FN$ for all $\sigma\in W_F$
\end{enumerate}
and the conjugacy classes of the continuous group homomorphisms 
$$
 \varphi:W_F\times SL_2(\Bbb C)\to\mathcal{G}
$$
such that
\begin{enumerate}
\item $\text{\rm Ker}(\varphi)\cap I_F$ is an open subgroup of $I_F$, 
\item $\varphi(\widetilde{\text{\rm Fr}})\in\mathcal{G}$ is
      semi-simple element,
\item $\varphi|_{SL_2(\Bbb C)}$ is a morphism of complex algebraic
  group
\end{enumerate}
defined by 
$$
 \rho|_{I_F}=\varphi|_{I_F},
 \quad
 \rho(\widetilde{\text{\rm Fr}})
 =\varphi(\widetilde{\text{\rm Fr}})\cdot
  \varphi\begin{pmatrix}
          q^{-1/2}&0\\
          0&q^{1/2}
         \end{pmatrix},
 \quad
 N=d\varphi\begin{pmatrix}
            0&1\\
            0&0
           \end{pmatrix}
$$
where $d\varphi:\frak{sl}_2(\Bbb C)\to\text{\rm Lie}(\mathcal{G})$ is
the differential of $\varphi|_{SL_2(\Bbb C)}$. The triplet 
$(\rho,\mathcal{G},N)$ or the group homomorphism $\varphi$ is called a
representation of Weil-Deligne group on $\mathcal{G}$. 

Take an algebraic complex representation $(r,V)$ of $\mathcal{G}$,
that is $V$ is a finite dimensional complex vector space and 
$r:\mathcal{G}\to GL_{\Bbb C}(V)$ is a morphism of complex algebraic
  group. Since the kernel $V_N$ of 
$dr(N)\in\text{\rm Lie}(GL_{\Bbb C}(V))=\text{\rm End}_{\Bbb C}(V)$ is 
$r\circ\rho(I_F)$-stable, we will put
$$
 L(\varphi,r,s)
 =\det\left(
   1-q^{-s}r\circ\rho(\widetilde{\text{\rm Fr}})|_{V_N^{I_F}}
              \right)^{-1}
$$
where $V_N^{I_F}$ is the subspace of $V_N$ of the $I_F$-fixed
vectors. The $\varepsilon$-factor is defined by
$$
 \varepsilon(\varphi,r,s)
 =\varepsilon_0(r\circ\rho,s)\cdot
  \det\left(
   -q^{-s}r\circ\rho(\widetilde{\text{\rm Fr}})|_{V^{I_F}/V_N^{I_F}}
            \right)
$$
where $V^{I_F}$ is the subspace of $V$ of the $I_F$-fixed vectors and 
$$
 \varepsilon_0(r\circ\rho,s)
 =w(r\circ\rho)\cdot q^{-a(r\circ\rho)(s-1/2)}
$$ 
is the $\varepsilon$-factor of the representation 
$$
 r\circ\rho:W_F\to GL_{\Bbb C}(V)
$$
of Weil group. Here $w(r\circ\rho)$ is a complex number 
of absolute value one (the root
number) and $a(r\circ\rho)$ is the Artin conductor defined as follows. 
There exists a finite extension $L/F^{\text{\rm ur}}$ such that 
$\text{\rm Gal}(F^{\text{sep}}/L)\subset I_F\cap\text{\rm Ker}(\rho)$. Put
\begin{align*}
 D_t&=D_t(L/F^{\text{\rm ur}})\\
 &=\left\{\sigma\in\text{\rm Gal}(L/F^{\text{\rm ur}})\mid
    x^{\sigma}\equiv x\npmod{\frak{p}_L^{t+1}}\,
     \text{\rm for}\,\forall x\in O_L\right\}
\end{align*}
for $t=0,1,2,\cdots$. Then $a(r\circ\rho)$ is defined by
$$
 a(r\circ\rho)
 =\sum_{t=0}^{\infty}(D_0:D_t)^{-1}\dim_{\Bbb C}\left(V/V^{D_t}\right)
$$
where $V^{D_t}$ is the subspace of $V$ of the 
$r\circ\rho(\widetilde D_t)$-fixed vectors with the inverse image 
$\widetilde D_t$ of $D_t$ by the restriction mapping 
$I_F\to\text{\rm Gal}(L/F^{\text{\rm ur}})$. Finally the
$\gamma$-factor is defined by
$$
 \gamma(\varphi,r,s)
 =\varepsilon(\varphi,r,s)\cdot
  \frac{L(\varphi,r^{\vee},1-s)}
       {L(\varphi,r,s)}
$$
where $r^{\vee}$ is the dual of $r$. 

In the following discussions, the complex algebraic group
$\mathcal{G}$ is the $L$-group of $G=SL_n$ over $F$, that is 
$\mathcal{G}=PGL_n(\Bbb C)$ since $SL_n$ is split over $F$, 

\subsection[]{}
\label{subsec:principal-parameter-of-weil-group}
Let $\text{\rm Sym}_{n-1}$ be the symmetric tensor representation of
$SL_2(\Bbb C)$ on the space of the complex coefficient homogeneous
polynomials of $X,Y$ of degree $n-1$, which gives the group
homomorphism 
$$
 \text{\rm Sym}_{n-1}:SL_2(\Bbb C)\to GL_n(\Bbb C)
$$
with respect to the $\Bbb C$-basis 
$$
 \{X^{n-1},X^{n-2}Y,\cdots,XY^{n-2},Y^{n-1}\}.
$$
Then 
$$
 d\text{\rm Sym}_{n-1}\begin{pmatrix}
                       0&1\\
                       0&0
                      \end{pmatrix}=N_0
 =\begin{bmatrix}
   0&1& &      & \\
    &0&1&      & \\
    & &\ddots&\ddots& \\
    & & &0&1\\
    & & &      &0
   \end{bmatrix}
$$
is the nilpotent element in $\frak{pgl}_n(\Bbb C)=\frak{sl}_n(\Bbb C)$
associated with the standard {\it \'epinglage} of the standard root
system of $\frak{sl}_n(\Bbb C)$. Then 
$$
 \varphi_0:W_F\times SL_2(\Bbb C)
           \xrightarrow{\text{\rm proj.}}SL_2(\Bbb C)
           \xrightarrow{\text{\rm Sym}_{n-1}}GL_n(\Bbb C)
           \xrightarrow{\text{\rm canonical}}PGL_n(\Bbb C)
$$
is a representation of Weil-Deligne group with the associated triplet 
$(\rho_0,PGL_n(\Bbb C),N_0)$ such that
$\rho_0|_{I_F}$ is trivial and
$$
 \rho_0(\widetilde{\text{\rm Fr}})
 =\overline{
   \begin{bmatrix}
    q^{-(n-1)/2}&            &      &           &           \\
                &q^{-(n-3)/2}&      &           &           \\
                &            &\ddots&           &           \\
                &            &      &q^{(n-3)/2}&           \\
                &            &      &           &q^{(n-1)/2}
   \end{bmatrix}}
 \in PGL_n(\Bbb C).
$$
Let $\text{\rm Ad}:PGL_n(\Bbb C)\to GL_{\Bbb C}(\widehat{\frak g})$ be 
the adjoint representation of $PGL_n(\Bbb C)$ on 
$\widehat{\frak g}=\frak{sl}_n(\Bbb C)$. Then 
\begin{equation}
 \left\{N_0^k\mid k=1,2,\cdots,n-1\right\}
\label{eq:c-basis-of-widehat-g-n-0-for-sl-n}
\end{equation}
is the $\Bbb C$-basis of $\widehat{\frak g}_{N_0}$. The representation
matrix of $\text{\rm Ad}\circ\rho_0(\widetilde{\text{\rm Fr}})$ on 
$\widehat{\frak g}_{N_0}$ with respect to the $\Bbb C$-basis 
\eqref{eq:c-basis-of-widehat-g-n-0-for-sl-n} is
$$
  \begin{bmatrix}
   q^{-1}&      &      &          \\
         &q^{-2}&      &          \\
         &      &\ddots&          \\
         &      &      &q^{-(n-1)}
  \end{bmatrix}
$$
so that we have
$$
 L(\varphi_0,\text{\rm Ad},s)
 =\prod_{k=1}^{n-1}\left(1-q^{-(s+k)}\right)^{-1}.
$$
On the other hand \cite[p.448]{GrossReeder2010} shows 
$$
 \varepsilon(\varphi_0,\text{\rm Ad},0)=q^{n(n-1)/2}.
$$
Since the symmetric tensor representation $\text{\rm Sym}_{n-1}$ is
self-dual, we have
\begin{equation}
 \gamma(\varphi_0)=\gamma(\varphi_0,\text{\rm Ad},0)
 =q^{n(n-1)/2}\cdot\frac{1-q^{-1}}
                        {1-q^{-n}}.
\label{eq:gamma-factor-of-principal-parametor}
\end{equation}

\subsection[]{}
\label{subsec:l-factor-of-induced-rep-of-weil-group}
In this subsection, we will compute $L(\varphi,\text{\rm Ad},s)$ for a
representation $\varphi$ induced from a character of
$K^{\times}$ in the situation a little more general than that of
subsection \ref{subsec:ratio-of-gamma-factors}. 

Let $K/F$ be a tamely ramified Galois extension of degree $n$ and 
$$
 \theta:K^{\times}\to\Bbb C^{\times}
$$
is a continuous character such that $x\mapsto\theta(x^{\sigma-1})$ is
the trivial character of $K^{\times}$ only if 
$\sigma\in\text{\rm Gal}(K/F)$ is $1$. Based upon the group extension 
\eqref{eq:relative-weil-group-is-group-extension-of-fundamental-class},
we have an identification 
$W_{K/F}=\text{\rm Gal}(K/F)\times K^{\times}$ with group operation
$$
 (\sigma,x)\cdot(\tau,y)
 =(\sigma\tau,x^{\tau}\cdot y\cdot\alpha_{K/F}(\sigma,\tau))
$$
with the fundamental class 
$[\alpha_{K/F}]\in H^2(\text{\rm Gal}(K/F),K^{\times})$. Then the
induced representation $\text{\rm Ind}_{K^{\times}}^{W_{K/F}}\theta$
is realized on the complex vector space $V$ of the complex valued
functions on $\text{\rm Gal}(K/F)$ with the action of $W_{K/F}$
defined by
$$
 (\sigma\cdot\psi)(\tau)
 =\theta\left(\alpha_{K/F}(\sigma,\sigma^{-1}\tau)\right)\cdot
    \psi(\sigma^{-1}\tau),
 \qquad
 \alpha\cdot\psi=\theta_{\alpha}\cdot\psi
$$
for $(\sigma,\alpha)\in W_{K/F}$ and $\psi\in V$ with 
$\theta_{\alpha}\in V$ defined by 
$\theta_{\alpha}(\tau)=\theta(\alpha^{\tau})$. Take the standard basis
$\{\psi_{\sigma}\}_{\sigma\in\text{\rm Gal}(K/F)}$ of $V$ where
$$
 \psi_{\sigma}(\tau)=\begin{cases}
                      1&:\tau=\sigma,\\
                      0&:\tau\neq\sigma.
                     \end{cases}
$$
Then 
$$
 \rho\cdot\psi_{\sigma}
 =\theta\left(\alpha_{K/F}(\rho,\sigma)\right)\cdot\psi_{\rho\sigma}
$$
for $\rho,\sigma\in\text{\rm Gal}(K/F)$. In particular 
$\sigma\cdot\psi_1=\psi_{\sigma}$. We have

\begin{prop}
\label{prop:induced-rep-of-relative-weil-group-is-irreducible}
$\text{\rm Ind}_{K^{\times}}^{W_{K/F}}\theta$ is an irreducible
representation of $W_{K/F}$.
\end{prop}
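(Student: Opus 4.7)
The plan is to apply the standard irreducibility criterion for a representation induced from a normal subgroup of finite index. Since $K^{\times}$ is normal in $W_{K/F}$ with quotient $\text{\rm Gal}(K/F)$ of order $n$, the induced representation is irreducible if and only if the $n$ characters of $K^{\times}$ obtained by conjugating $\theta$ by a system of coset representatives for $W_{K/F}/K^{\times}$ are pairwise distinct. Rather than invoking Mackey theory abstractly, I would read these conjugates off directly from the explicit realization constructed just before the proposition.

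From the formula $\alpha\cdot\psi_{\sigma}=\theta(\alpha^{\sigma})\psi_{\sigma}$, each line $\Bbb C\psi_{\sigma}$ is $K^{\times}$-stable and affords the character $\theta^{\sigma}:\alpha\mapsto\theta(\alpha^{\sigma})$ of $K^{\times}$. If $\theta^{\sigma}=\theta^{\tau}$ for some $\sigma,\tau\in\text{\rm Gal}(K/F)$, then setting $\rho=\sigma\tau^{-1}$ and replacing $\alpha$ by $\alpha^{\tau^{-1}}$ yields $\theta(\alpha^{\rho-1})=1$ for every $\alpha\in K^{\times}$, so by the hypothesis on $\theta$ we must have $\rho=1$, i.e.\ $\sigma=\tau$. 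Consequently the $n$ one-dimensional $K^{\times}$-characters appearing in $V|_{K^{\times}}$ are pairwise distinct, and the lines $\Bbb C\psi_{\sigma}$ ($\sigma\in\text{\rm Gal}(K/F)$) are exactly the isotypic components of $V|_{K^{\times}}$.

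Now let $W\subset V$ be a nonzero $W_{K/F}$-stable subspace. Being $K^{\times}$-stable, $W$ is a direct sum of isotypic components, so $W=\bigoplus_{\sigma\in S}\Bbb C\psi_{\sigma}$ for some nonempty $S\subset\text{\rm Gal}(K/F)$. But the formula $\rho\cdot\psi_{\sigma}=\theta(\alpha_{K/F}(\rho,\sigma))\psi_{\rho\sigma}$ shows that $\text{\rm Gal}(K/F)$ permutes the collection $\{\Bbb C\psi_{\sigma}\}$ transitively (each $\rho$ sends $\Bbb C\psi_{\sigma}$ onto $\Bbb C\psi_{\rho\sigma}$ via a nonzero scalar). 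Hence the $W_{K/F}$-stability of $W$ forces $S$ to be closed under left multiplication by $\text{\rm Gal}(K/F)$, so $S=\text{\rm Gal}(K/F)$ and $W=V$, proving irreducibility. No essential obstacle is expected: the hypothesis on $\theta$ is precisely what makes all conjugates distinct, and the explicit model renders the $\text{\rm Gal}(K/F)$-transitivity visible by inspection.
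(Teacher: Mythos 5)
Your proof is correct, but it takes a different route from the paper's. The paper argues via the commutant: it takes $T\in\text{\rm End}_{W_{K/F}}(V)$, picks $\tau$ with $(T\psi_1)(\tau)\neq 0$, and uses $\alpha\cdot T\psi_1=T(\alpha\cdot\psi_1)$ together with the hypothesis on $\theta$ to force $\tau=1$, so $T\psi_1=c\psi_1$; then equivariance under the translates $\sigma\cdot\psi_1=\psi_{\sigma}$ gives $T=c\cdot\text{\rm id}_V$. You instead test against invariant subspaces: you note that $V|_{K^{\times}}$ is multiplicity-free with the lines $\Bbb C\psi_{\sigma}$ as its isotypic components (the pairwise distinctness of the conjugates $\theta^{\sigma}$ being exactly the hypothesis on $\theta$), so any nonzero $W_{K/F}$-stable subspace is a sum of such lines, and the transitive permutation of the lines by $\text{\rm Gal}(K/F)$ forces it to be all of $V$. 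Both arguments rest on the same two facts read off the same explicit model (distinct conjugate characters, transitive Galois permutation of the basis lines), but the test objects differ. Your subspace argument delivers irreducibility directly, with no need for the semisimplicity that is implicitly invoked when one passes from ``the commutant is $\Bbb C$'' to ``the representation is irreducible'' (harmless here, since the $K^{\times}$-action is visibly diagonal and the quotient is finite, but worth noting). The paper's commutant computation, on the other hand, is shorter and is precisely the template reused right afterwards to determine the projective centralizer $A_{\varphi}$ in Proposition \ref{prop:centralizer-of-image-of-varphi-in-pgl(v)-general-case}, where the same evaluation-at-$\psi_1$ trick is run for operators satisfying $gT=\lambda(g)Tg$; so the paper's choice buys economy later in the section.
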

\begin{proof}
Take a $T\in\text{\rm End}_{W_{K/F}}(V)$. If $(T\psi_1)(\tau)\neq 0$
with $\tau\in\text{\rm Gal}(K/F)$, then, for any 
$\alpha\in K^{\times}$, $\alpha\cdot T\psi_1=T(\alpha\cdot\psi_1)$
  implies $\theta(\alpha^{\tau})=\theta(\alpha)$, and hence
  $\tau=1$. This means $T\psi_1=c\psi_1$ with a $c\in\Bbb C$. Then we
  have
$T\psi_{\sigma}=c\psi_{\sigma}$ for all $\sigma\in\text{\rm Gal}(K/F)$
    and hence $T=c\cdot\text{\rm id}_V$.
\end{proof}

Put 
$$
 \Theta:W_{K/F}
        \xrightarrow{\text{\rm Ind}_{K^{\times}}^{W_{K/F}}\theta}
        GL_{\Bbb C}(V)
        \xrightarrow{\text{\rm canonical}}
        PGL_{\Bbb C}(V)
$$
and
$$
 \rho:W_F\xrightarrow{\text{\rm canonical}}W_{K/F}
         \xrightarrow{\Theta}
      PGL_{\Bbb C}(V).
$$
Then the representation of Weil-Deligne group corresponding to the 
triplet $(\rho,PGL_{\Bbb C}(V),0)$ is 
$$
 \varphi:W_F\times SL_2(\Bbb C)
             \xrightarrow{\text{\rm projection}}
         W_F\xrightarrow{\rho}PGL_{\Bbb C}(V).
$$
Let $A_{\varphi}$ be the centralizer of $\text{\rm Im}(\varphi)$ in 
$PGL_{\Bbb C}(V)$. 

Let us denote by $\mathcal{A}_{\theta}$ the set of the group homomorphism 
$\lambda:W_{K/F}\to\Bbb C^{\times}$ whose restriction to $K^{\times}$
is a character $\alpha\mapsto\theta(\alpha^{\tau-1})$ with some 
$\tau\in\text{\rm Gal}(K/F)$ which is uniquely determined by
  $\lambda$. Let us call it associated with $\lambda$. 
If $\tau\in\text{\rm Gal}(K/F)$ is associated with 
$\lambda\in\mathcal{A}_{\theta}$, then we have 
$$
 \theta(\alpha^{\sigma(\tau-1)})=\theta(\alpha^{\tau-1})
$$
for all $\alpha\in K^{\times}$ and 
$\sigma,\tau\in\text{\rm Gal}(K/F)$, because 
$$
 \lambda(\alpha^{\sigma})=\lambda((\sigma,1)^{-1}(1,\alpha)(\sigma,1))
 =\lambda(\alpha).
$$
This implies that $\mathcal{A}_{\theta}$ is in fact a subgroup of the
character group of $W_{K/F}$. 

Take a $\overline T\in A_{\varphi}$ with $T\in GL_{\Bbb C}(V)$. Then
we have a character 
$$
 \lambda:W_{K/F}\to\Bbb C^{\times}
$$
such that $gT=\lambda(g)Tg$ for all $g\in W_{K/F}$. If 
$(T\psi_1)(\tau)\neq 0$ with $\tau\in\text{\rm Gal}(K/F)$ then 
$\alpha\cdot T(\psi_1)=\lambda(\alpha)T(\alpha\cdot\psi_1)$ for
$\alpha\in K^{\times}$ implies 
$\theta(\alpha^{\tau})=\lambda(\alpha)\theta(\alpha)$
for all $\alpha\in K^{\times}$. Hence we have $T\psi_1=c\psi_{\tau}$
with $c\in\Bbb C^{\times}$. Then we have
$$
 T\psi_{\sigma}=c\cdot\lambda(\sigma)^{-1}\sigma\cdot\psi_{\tau}
 =c\cdot\lambda(\sigma)^{-1}\theta\left(\alpha_{K/F}(\sigma,\tau)\right)
  \cdot\psi_{\sigma\tau}
$$
for all $\sigma\in\text{\rm Gal}(K/F)$. We have 

\begin{prop}
\label{prop:centralizer-of-image-of-varphi-in-pgl(v)-general-case}
$\overline T\mapsto\lambda$ gives a group isomorphism of
  $A_{\varphi}$ onto $\mathcal{A}_{\theta}$.
\end{prop}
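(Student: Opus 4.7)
The plan is to verify, in turn, that $\overline T\mapsto\lambda$ is well-defined, a group homomorphism, injective, and surjective. Well-definedness is immediate because rescaling $T$ by a nonzero scalar leaves the defining relation $gT=\lambda(g)Tg$ intact. The homomorphism property is a one-line computation: $g(T_1T_2)=\lambda_1(g)T_1gT_2=\lambda_1(g)\lambda_2(g)T_1T_2g$. For injectivity, if $\lambda=1$ then $T$ intertwines the action of $W_{K/F}$ on $V$ with itself, and the irreducibility established in Proposition~\ref{prop:induced-rep-of-relative-weil-group-is-irreducible} together with Schur's lemma forces $T$ to be scalar, so $\overline T=1$ in $PGL_{\Bbb C}(V)$.

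That the image is contained in $\mathcal{A}_{\theta}$ has essentially been shown in the paragraph preceding the proposition: one derives $T\psi_1=c\psi_{\tau}$ for a unique $\tau\in\text{\rm Gal}(K/F)$ and reads off $\lambda|_{K^{\times}}(\alpha)=\theta(\alpha^{\tau-1})$.

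The main remaining step, and the one I expect to present the real work, is surjectivity. For $\lambda\in\mathcal{A}_{\theta}$ with associated $\tau$, I would define $T\in GL_{\Bbb C}(V)$ by the formula suggested by the preceding paragraph,
\[
 T\psi_{\sigma}
  =\lambda(\sigma)^{-1}\,
   \theta\bigl(\alpha_{K/F}(\sigma,\tau)\bigr)\,
   \psi_{\sigma\tau},
\]
and verify the equation $gT=\lambda(g)Tg$ in the two cases $g\in K^{\times}$ and $g\in\text{\rm Gal}(K/F)$. The $K^{\times}$-case, using $\alpha\cdot\psi_{\sigma}=\theta(\alpha^{\sigma})\psi_{\sigma}$, reduces to $\theta(\alpha^{\sigma\tau})=\lambda(\alpha)\theta(\alpha^{\sigma})$, which holds because $\lambda(\alpha)=\theta(\alpha^{\tau-1})$ is $\text{\rm Gal}(K/F)$-invariant, as already noted when $\mathcal{A}_{\theta}$ was introduced. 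The $\text{\rm Gal}(K/F)$-case, after expanding the twisted product in $W_{K/F}$ into $\lambda(\rho)\lambda(\sigma)\lambda(\rho\sigma)^{-1}=\theta(\alpha_{K/F}(\rho,\sigma)^{\tau-1})$ via $\lambda\in\mathcal{A}_{\theta}$, reduces to
\[
 \alpha_{K/F}(\rho,\sigma\tau)\,\alpha_{K/F}(\sigma,\tau)
  =\alpha_{K/F}(\rho\sigma,\tau)\,\alpha_{K/F}(\rho,\sigma)^{\tau}
\]
read inside $\theta$, which is precisely the normalized $2$-cocycle relation satisfied by the fundamental class $\alpha_{K/F}$. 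This cocycle manipulation is where all the data must conspire, and handling it cleanly is the principal obstacle in the argument.
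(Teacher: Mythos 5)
Your proof is correct and follows the same route as the paper: injectivity and the homomorphism property come from irreducibility (Proposition~\ref{prop:induced-rep-of-relative-weil-group-is-irreducible}), and surjectivity is established by exhibiting the same explicit $T\psi_{\sigma}=\lambda(\sigma)^{-1}\theta\bigl(\alpha_{K/F}(\sigma,\tau)\bigr)\psi_{\sigma\tau}$ and checking $gT=\lambda(g)Tg$. The paper simply asserts the final verification, whereas you correctly spell out the two cases and identify the $2$-cocycle relation $\alpha_{K/F}(\rho,\sigma)^{\tau}\,\alpha_{K/F}(\rho\sigma,\tau)=\alpha_{K/F}(\sigma,\tau)\,\alpha_{K/F}(\rho,\sigma\tau)$ as the key input; this is a welcome expansion, not a different argument.
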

\begin{proof}
It is clear that $\overline T\mapsto\lambda$ is injective group
homomorphism, because $\text{\rm Ind}_{K^{\times}}^{W_{K/F}}\theta$ is
irreducible. 
Take any $\lambda\in\mathcal{A}_{\theta}$ and the 
$\tau\in\text{\rm Gal}(K/F)$ associated with it. 
Define a $T\in GL_{\Bbb C}(V)$ by
$$
 T\psi_{\sigma}
 =\lambda(\sigma)^{-1}\theta\left(\alpha_{K/F}(\sigma,\tau)\right)
   \cdot\psi_{\sigma\tau}
$$
for all $\sigma\in\text{\rm Gal}(K/F)$. Then we have 
$gT=\lambda(g)\cdot Tg$ for all $g\in W_{K/F}$. 
\end{proof}

From now on, we will suppose that $x\mapsto\theta(x^{\sigma-1})$ is a
trivial character of $O_K^{\times}$ only if $\sigma\in\text{\rm Gal}(K/F)$
is $1$. 

The $2$-cocycle $\alpha_{K/F}$ can be chosen
so that $\alpha_{K/F}(\sigma,\tau)\in O_K^{\times}$ for all 
$\sigma,\tau\in\text{\rm Gal}(K/K_0)$ where 
$K_0=K\cap F^{\text{\rm ur}}$ is the maximal
unramified subextension of $K/F$. Then 
the image of $I_F\subset W_F$ by the canonical surjection 
$W_F\to W_{K/F}$ is 
$\text{\rm Gal}(K/K_0)\times O_K^{\times}\subset W_{K/F}$.  Since
$K/F$ is tamely ramified, the Galois group 
$\text{\rm Gal}(K/K_0)=\langle\tau_0\rangle$ is a cyclic group of
order $e=e(K/F)$. Then $\text{\rm Gal}(K/F)$ is generated by 
$\sigma_0=\widetilde{\text{\rm Fr}}|_K$ and $\tau_0$. We have
\begin{equation}
 \sigma_0\tau_0\sigma_0^{-1}=\tau_0^m
\label{eq:relation-between-generator-of-tamely-ramified-galois-group}
\end{equation}
with some $0<m<e$ such that $\text{\rm GCD}\{e,m\}=1$. Put 
$$
 \psi_{ij}=\psi_{\tau_0^i\sigma_0^j}\in V
 \;\text{\rm with}\;\;0\leq i<e,\;\; 0\leq j<f.
$$
Then the $\Bbb C$-basis $\{\psi_{ij}\}_{i,j}$ gives the identification 
$$
 GL_{\Bbb C}(V)=GL_n(\Bbb C)
 \;\text{\rm and}\;
 PGL_{\Bbb C}(V)=PGL_n(\Bbb C).
$$
We have, for all $\alpha\in K^{\times}$ 
\begin{equation}
 \alpha\cdot\psi_{ij}
 =\theta\left(\alpha^{\tau_0^i\sigma_0^j}\right)\cdot\psi_{ij}
 \quad
 (0\leq i<e, 0\leq j<f)
\label{eq:explicit-action-of-theta-alpha}
\end{equation}
so that $\Theta(\alpha)\in PGL_n(\Bbb C)$ is diagonal. On the other
hand we have
$$
 \tau_0\cdot\psi_{ij}
 =\begin{cases}
   \theta\left(\alpha_{K/F}(\tau_0,\tau_0^i\sigma_0^j)\right)
    \cdot\psi_{i+1,j}&:0\leq i<e-1,\\
   \theta\left(\alpha_{K/F}(\tau_0,\tau_0^{-1}\sigma_0^j)\right)
    \cdot\psi_{0,j}&:i=e-1
  \end{cases}
$$
hence
\begin{equation}
 \Theta(\tau_0)=\overline{
                 \begin{bmatrix}
                  J_0&   &      &       \\
                     &J_1&      &       \\
                     &   &\ddots&       \\
                     &   &      &J_{f-1}
                 \end{bmatrix}}
 \in PGL_n(\Bbb C)
\label{eq:explicit-action-of-theta-tau-0}
\end{equation}
with 
$$
 J_j=\begin{bmatrix}
          0  &   0  &\cdots&  0      &  1   \\
          1  &   0  &      &         &  0   \\
             &   1  &\ddots&         &\vdots\\
             &      &\ddots&  0      &  0   \\
             &      &      &  1      &  0
     \end{bmatrix}
     \begin{bmatrix}
      a_{0j}&      &      &         \\
            &a_{1j}&      &         \\
            &      &\ddots&         \\
            &      &      &a_{e-1,j}
     \end{bmatrix}
$$
($a_{ij}=\theta\left(\alpha_{K/F}(\tau_0,\tau_0^i\sigma_0^j)\right)$). So
the space of the $\text{\rm Ad}\circ\varphi(I_F)$-fixed vectors in 
$\widehat{\frak g}=\frak{pgl}_n(\Bbb C)=\frak{sl}_n(\Bbb C)$ is
$$
 {\widehat{\frak{g}}}^{\text{\rm Ad}\circ\varphi(I_F)}=
 \left\{\begin{bmatrix}
          a_11_e&      &      &      \\
                &a_21_e&      &      \\
                &      &\ddots&      \\
                &      &      &a_f1_e
         \end{bmatrix}\biggm| 
          \begin{array}{l}
           a_i\in\Bbb C,\\
           a_1+a_2+\cdots+a_f=0
          \end{array}\right\}.
$$
A $\Bbb C$-basis of it is given by
\begin{equation}
 X_1=\begin{bmatrix}
      P&   &      & \\
       &0_e&      & \\
       &   &\ddots& \\
       &   &      &0_e
     \end{bmatrix},\;
 X_2=\begin{bmatrix}
       0_e& &      & \\
          &P&      & \\
          & &\ddots& \\
          & &      &0_e
      \end{bmatrix},\cdots,
  X_{f-1}=\begin{bmatrix}
           0_e&      &   &  \\
              &\ddots&   &  \\
              &      &0_e&  \\
              &      &   &P
          \end{bmatrix}
\label{eq:baisi-of-ad-varphi-i-f-fixed-subspec-of-widehat-g}
\end{equation}
with $P=\begin{bmatrix}
         1_e&   \\
            &-1_e
        \end{bmatrix}$. The relation
\eqref{eq:relation-between-generator-of-tamely-ramified-galois-group}
gives
$$
 \sigma_0\tau_0^i\sigma_0^j
 =\begin{cases}
   \tau_0^{i+m}\sigma_0^{j+1}&:0\leq j<f-1,\\
   \tau_0^{i+m}&:j=f-1.
  \end{cases}.
$$
Put $i+m\equiv i^{\prime}\npmod{e}$ for $0\leq i,i^{\prime}<e$ and let 
$[m]_e\in GL_e(\Bbb Z)$ be the permutation matrix associated with the
element
$$
 \begin{pmatrix}
  0&1&2&\cdots&e-1\\
  0^{\prime}&1^{\prime}&2^{\prime}&\cdots&(e-1)^{\prime}
 \end{pmatrix}
$$
of the symmetric group of degree $e$. Then we have
$$
 \Theta(\sigma_0)=\overline{
                   \begin{bmatrix}
                    0  & 0 &\cdots&   0   &I_{f-1}\\
                    I_0& 0 &      &       &   0   \\
                       &I_1&\ddots&       & \vdots\\
                       &   &\ddots&   0   &   0   \\
                       &   &      &I_{f-2}&   0
                   \end{bmatrix}}
$$
with
$$
 I_j=[m]_e\begin{bmatrix}
           b_{0j}&      &      &         \\
                 &b_{1j}&      &         \\
                 &      &\ddots&         \\
                 &      &      &b_{e-1,j}
          \end{bmatrix}
$$
($b_{ij}=\theta\left(\alpha_{K/F}(\sigma_0,\tau_0^i\sigma_0^j)\right)$). So
 the representation matrix of 
$\text{\rm Ad}\circ\varphi(\widetilde{\text{\rm Fr}})$ on 
${\widehat{\frak{g}}}^{\text{\rm Ad}\circ\varphi(I_F)}$ with respect
 to the basis 
\eqref{eq:baisi-of-ad-varphi-i-f-fixed-subspec-of-widehat-g} is
$$
  \begin{bmatrix}
     -1  &   1  &  0   &\cdots&  0   \\
     -1  &   0  &  1   &\cdots&  0   \\
   \vdots&\vdots&\ddots&\ddots&\vdots\\
     -1  &   0  &      &   0  &  1   \\
     -1  &   0  &\cdots&   0  &  0
  \end{bmatrix}.
$$
Hence we have
\begin{align*}
 L(\varphi,\text{\rm Ad},s)
 &=\det\left(
   1-q^{-s}\text{\rm Ad}\circ\varphi(\widetilde{\text{\rm Fr}})
    |_{\frak{g}^{\text{\rm Ad}\circ\varphi(\widetilde{\text{\rm Fr}})}}
      \right)^{-1}\\
 &=\left(1+q^{-s}+q^{-2s}+\cdots+q^{-(f-1)s}\right)^{-1}
\end{align*}
and 
\begin{equation}
 \frac{L(\varphi,\text{\rm Ad},1)}
      {L(\varphi,\text{\rm Ad},0)}
 =f\cdot\frac{1-q^{-1}}
            {1-q^{-f}}.
\label{eq:ratio-of-l-function-of-tamely-induced-rep-of-weil-group}
\end{equation}

\subsection[]{}
\label{subsec:calculation-of-artin-conductor}
In this subsection, we will prove Theorem 
\ref{th:ratio-of-gamma-factor}. To begin with, we will prove


\begin{prop}
\label{prop:rep-theta-of-relative-weil-group-is-indep-of-extension}
The group homomorphism
$$
 \Theta:W_{K/F}
        \xrightarrow{\text{\rm Ind}_{K^{\times}}^{W_{K/F}}\theta}
        GL_{\Bbb C}(V)
        \xrightarrow{\text{\rm canonical}}
        PGL_{\Bbb C}(V)
$$
is independent, up to the conjugate in 
$PGL_{\Bbb C}(V)$,  of the choice of the extension 
$\theta:K^{\times}\to\Bbb C^{\times}$ from the
character $\theta$ of $G_{\beta}(O_F/\frak{p}^r)$.
\end{prop}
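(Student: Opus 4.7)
The plan is to show that for any two extensions $\theta_1, \theta_2: K^\times \to \Bbb C^\times$ of the given character of $G_\beta(O_F/\frak{p}^r)$, the induced representations differ by a twist by a one-dimensional character of $W_{K/F}$: concretely, $\text{\rm Ind}_{K^\times}^{W_{K/F}}\theta_1 \simeq \lambda \otimes \text{\rm Ind}_{K^\times}^{W_{K/F}}\theta_2$ for some character $\lambda$ of $W_{K/F}$. Since any one-dimensional twist becomes trivial upon projection to $PGL_{\Bbb C}(V)$, this will force $\Theta_1$ and $\Theta_2$ to be conjugate in $PGL_{\Bbb C}(V)$ under the resulting intertwiner.

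Put $\chi = \theta_1\theta_2^{-1}$. Since $\theta_1$ and $\theta_2$ agree with the given $\theta$ on $G_\beta(O_F/\frak{p}^r)$, $\chi$ is trivial on the preimage $H = \{x \in O_K^\times : N_{K/F}(x) \in 1+\frak{p}^r\}$. The key step is to write $\chi = \chi'' \circ N_{K/F}$ for some continuous character $\chi''$ of $F^\times$. On units, $\chi|_{O_K^\times}$ factors through $O_K^\times/H$, which $N_{K/F}$ embeds into the finite abelian group $O_F^\times/(1+\frak{p}^r)$, and standard character-extension in finite abelian groups yields a character $\chi''$ of $O_F^\times$ (trivial on $1+\frak{p}^r$) with $\chi'' \circ N_{K/F} = \chi$ on $O_K^\times$. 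To handle the uniformizer, I use that $v_F(N_{K/F}(\varpi_K)) = f$, so $N_{K/F}(\varpi_K) = u_0\varpi^f$ with $u_0 \in O_F^\times$, and I choose $\chi''(\varpi)$ to be any $f$-th root of $\chi(\varpi_K)/\chi''(u_0)$ in $\Bbb C^\times$; this is possible because $\Bbb C^\times$ is divisible. With these definitions $\chi''\circ N_{K/F}$ and $\chi$ agree on $O_K^\times$ and at $\varpi_K$, hence on all of $K^\times = O_K^\times\cdot\varpi_K^{\Bbb Z}$.

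Local class field theory then associates to $\chi''$ a character $\tilde\chi''$ of $W_F^{\text{\rm ab}}$, and pulling back via the canonical surjection $W_{K/F}\twoheadrightarrow W_F^{\text{\rm ab}}$ produces the desired $\lambda$. By functoriality of reciprocity — the inclusion $W_K\hookrightarrow W_F$ corresponds on abelianizations to $N_{K/F}: K^\times \to F^\times$ — the restriction of $\lambda$ to $K^\times$ equals $\chi''\circ N_{K/F} = \chi$, so $\theta_1 = \theta_2\cdot\lambda|_{K^\times}$, and the standard projection formula $\text{\rm Ind}(\sigma\otimes\pi|_H) \simeq \text{\rm Ind}(\sigma)\otimes\pi$ delivers the announced isomorphism. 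The main obstacle is the construction of $\chi''$: two independent choices enter each $\theta_i$ (the extension of $\theta$ to $(O_K/\frak{p}_K^{er})^\times$, and the free value at $\varpi_K$), and both must be absorbed into a single character of $F^\times$; the unit part is routine character-extension, while the $\varpi_K$ part demands the $f$-th root extraction in $\Bbb C^\times$, after which everything else is formal.
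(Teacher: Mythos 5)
Your proof is correct but takes a genuinely different route from the paper's. Both arguments begin identically: they show that the ratio of two extensions equals $\chi\circ N_{K/F}$ for a character $\chi$ of $F^\times$, obtained by extending from $N_{K/F}(O_K^\times)$ to $O_F^\times$ and then to $F^\times$ (the paper constructs this $\chi$ in exactly this way, noting agreement on $U_{K/F}$ and then extending). They then diverge. The paper builds the intertwiner explicitly on the model of functions on $\text{\rm Gal}(K/F)$: it sets $T\psi'=\psi$ with $\psi(\tau)=\chi(\gamma(\tau))\psi'(\tau)$, where $\gamma(\tau)=\prod_{\sigma}\alpha_{K/F}(\tau,\sigma)\in F^\times$, and verifies by direct computation (using the cocycle identity $N_{K/F}(\alpha_{K/F}(\sigma,\tau))=\gamma(\sigma)\gamma(\sigma\tau)^{-1}\gamma(\tau)$) the relations $\sigma T=\chi(\gamma(\sigma))^{-1}T\sigma$ and $\alpha T=\chi(N_{K/F}(\alpha))T\alpha$, which exhibit the two induced representations as differing by a one-dimensional twist and hence agreeing in $PGL_{\Bbb C}(V)$. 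You instead recognize $\chi\circ N_{K/F}$ as the restriction to $K^\times$ of a one-dimensional character $\lambda$ of $W_{K/F}$ --- the pullback through $W_{K/F}\to W_F^{\text{\rm ab}}$ of the reciprocity character of $\chi$, using the compatibility of $W_K^{\text{\rm ab}}\to W_F^{\text{\rm ab}}$ with $N_{K/F}$ --- and then invoke the projection formula $\text{\rm Ind}(\theta_2\cdot\lambda|_{K^\times})\cong\text{\rm Ind}(\theta_2)\otimes\lambda$, after which the twist disappears in $PGL_{\Bbb C}(V)$. Both are valid. The paper's version is self-contained, never leaving the explicit $2$-cocycle already at hand; yours is more conceptual, making manifest that the ambiguity is precisely a one-dimensional twist pulled back from $W_F^{\text{\rm ab}}$, at the cost of citing class field theory functoriality and the projection formula. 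The paper's $\gamma$ is, in effect, the cocycle-level incarnation of your $\lambda$.
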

\begin{proof}
Take another extension $\theta^{\prime}:K^{\times}\to\Bbb C^{\times}$
from the character $\theta$ of 
$$
 G_{\beta}(O_F/\frak{p}^r)
 =\left\{
   \overline\varepsilon\in\left(O_K/\frak{p}_K^{er}\right)^{\times}
    \biggm| N_{K/F}(\varepsilon)\equiv 1\npmod{\frak{p}^r}\right\}
$$
Then $\theta=\theta^{\prime}$ on the subgroup
$$
 U_{K/F}=\{\varepsilon\in O_K^{\times}\mid N_{K/F}(\varepsilon)=1\}
$$
of $K^{\times}$ because the canonical group homomorphism 
$U_{K/F}\to G_{\beta}(O_F/\frak{p}^r)$ is surjective. 
So there exists a character $\chi:O_F^{\times}\to\Bbb C^{\times}$
such that 
$$
 \theta^{\prime}(\varepsilon)
 =\theta(\varepsilon)\cdot\chi\left(N_{K/F}(\varepsilon)\right)
$$
for all $\varepsilon\in O_K^{\times}$. We can extend $\chi$ to a
character of $F^{\times}$ so that
$$
 \theta^{\prime}(x)=\theta(x)\cdot\chi\left(N_{K/F}(x)\right)
$$
for all $x\in K^{\times}$. The induced representations 
$\text{\rm Ind}_{K^{\times}}^{W_{K/F}}\theta$ and 
$\text{\rm Ind}_{K^{\times}}^{W_{K/F}}\theta^{\prime}$ are realized on
  the complex vector space of the complex valued functions on
  $\text{\rm Gal}(K/F)$. For any 
$\psi^{\prime}
 \in\text{\rm Ind}_{K^{\times}}^{W_{K/F}}\theta^{\prime}$, put
$\psi(\tau)=\chi(\gamma(\tau))\cdot\psi^{\prime}(\tau)$ 
($\tau\in\text{\rm Gal}(K/F)$), where
$$
 \gamma(\tau)=\prod_{\sigma\in\text{\rm Gal}(K/F)}
               \alpha_{K/F}(\tau,\sigma)
             \in F^{\times}.
$$
Note that we have
$$
 N_{K/F}(\alpha_{K/F}(\sigma,\tau))
 =\gamma(\sigma)\gamma(\sigma\tau)^{-1}\gamma(\tau)
$$
for all $\sigma,\tau\in\text{\rm Gal}(K/F)$. Then the direct
calculations show that the $\Bbb C$-linear map 
$T:\psi^{\prime}\mapsto\psi$ satisfies the relations
$$
 \sigma T=\chi(\gamma(\sigma))^{-1}\cdot T\sigma,
 \qquad
 \alpha T=\chi(N_{K/F}(\alpha))\cdot T\alpha
$$
for all $\sigma\in\text{\rm Gal}(K/F)$ and $\alpha\in K^{\times}$.
\end{proof}

We have also

\begin{prop}\label{prop:level-structure-of-theta}
Take a $\tau\in\text{\rm Gal}(K/F)$ and an integer $0\leq k<er$. Then 
$\theta(\alpha^{\tau})=\theta(\alpha)$ for all 
$\alpha\in 1+\frak{p}_K^{er-k}$ if and only if
$$
 \tau\in
 \begin{cases}
  \text{\rm Gal}(K/F)&:\text{\rm if $k<e$},\\
  \text{\rm Gal}(K/K_0)&:\text{\rm if $k=e$},\\
  \{1\} &:\text{\rm if $k>e$}.
 \end{cases}
$$
\end{prop}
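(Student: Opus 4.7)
The plan is to use the identity
$$\theta(\alpha^\tau)/\theta(\alpha) = \theta(\alpha^\tau\alpha^{-1}),$$
where $N_{K/F}(\alpha^\tau\alpha^{-1}) = 1$ places $\alpha^\tau\alpha^{-1}$ in $G_\beta(O_F/\frak{p}^r)$. This reduces the question to an intrinsic computation on $G_\beta$, where $\theta$ is the specific character parametrizing $\delta$ (with $\theta = \psi_\beta$ on $G_\beta \cap G(\frak{p}^l/\frak{p}^r)$), and in particular is independent of the extension of $\theta$ from $G_\beta$ to $K^\times$.

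First I would show that $\alpha^\tau\alpha^{-1}$ lies in $G(\frak{p}^l/\frak{p}^r)$ in the relevant range of $(k,\tau)$. Writing $\alpha = 1 + y$ with $y \in \frak{p}_K^{er-k}$, one has $\alpha^\tau\alpha^{-1} - 1 = (y^\tau - y)\alpha^{-1}$. Tameness gives $\varpi_K^\tau = \zeta_\tau\varpi_K$ for some $e$-th root of unity $\zeta_\tau$ when $\tau \in \mathrm{Gal}(K/K_0)$, while $\tau$ acts non-trivially on the residue field otherwise; a short computation yields $v_K(y^\tau - y) \geq v_K(y) + 1$ for $\tau \in \mathrm{Gal}(K/K_0)$ (modulo a harmless divisibility condition easily handled in the target range) and $v_K(y^\tau - y) \geq v_K(y)$ in general. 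Combined with $v_K(y) \geq er - k$, this places $\alpha^\tau\alpha^{-1}$ in $G(\frak{p}^l/\frak{p}^r)$ throughout $k \leq el'$, and for $\tau \in \mathrm{Gal}(K/K_0)$ even at $k = el' + 1$.

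In this range I substitute $\alpha^\tau\alpha^{-1} = 1 + \varpi^l Z$ into $\psi_\beta(1 + \varpi^l Z) = \tau(\varpi^{-l'} T_{K/F}(Z\beta))$ (here $\tau$ denotes the fixed additive character of $F$). Using the Galois-invariance $T_{K/F}((y^\tau - y)\beta) = T_{K/F}(y(\beta^{\tau^{-1}} - \beta))$ and a valuation check absorbing the $\alpha^{-1}$-correction into $\varpi^r O_F$, the triviality of $\theta(\alpha^\tau\alpha^{-1})$ for all $\alpha \in 1 + \frak{p}_K^{er-k}$ becomes $T_{K/F}(y(\beta^{\tau^{-1}} - \beta)) \in \frak{p}^r$ for every $y \in \frak{p}_K^{er-k}$. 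The trace-pairing duality, together with the tame different $\frak{d}_{K/F} = \frak{p}_K^{e-1}$, translates this into
$$\beta^{\tau^{-1}} - \beta \in \frak{p}_K^{k + 1 - e}.$$
I would then pin down $v_K(\beta^\sigma - \beta)$ for each $\sigma \in \mathrm{Gal}(K/F)$: since $\beta$ generates $O_K$ over $O_F$, the reduction $\overline\beta$ generates $\Bbb L_0/\Bbb F$, so $v_K(\beta^\sigma - \beta) = 0$ exactly when $\sigma \notin \mathrm{Gal}(K/K_0)$; and the tame different identity $\sum_{\sigma \ne 1} v_K(\beta^\sigma - \beta) = v_K(p'(\beta)) = e - 1$ forces each of the $e - 1$ non-trivial elements of $\mathrm{Gal}(K/K_0)$ to contribute exactly $1$. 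Substituting yields the stated trichotomy for every $k$ reachable by this direct computation.

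For $k$ outside this range, monotonicity closes the argument: enlarging $1 + \frak{p}_K^{er-k}$ can only shrink the set of admissible $\tau$. When $l' \geq 2$, already at $k = el' > e$ only $\tau = 1$ remains, so the matter is settled. When $l' = 1$, the critical case is $k = el' + 1 = e + 1$; but the direct argument still applies to $\tau \in \mathrm{Gal}(K/K_0)$, and the condition $k + 1 - e = 2$ together with $v_K(\beta^{\tau^{-1}} - \beta) = 1$ excludes every non-trivial element of $\mathrm{Gal}(K/K_0)$, after which monotonicity handles $k > e + 1$. The main obstacle I anticipate is the valuation book-keeping, namely verifying that the $\alpha^{-1}$-correction genuinely lands in $\varpi^r O_F$ uniformly across the claimed range of $(k,\tau)$ and handling the awkward divisibility subtlety in the tame estimate that is needed to reach $k = el' + 1$ for $l' = 1$.
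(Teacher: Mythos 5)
Your argument follows essentially the same route as the paper's proof: reduce $\theta(\alpha^{\tau})=\theta(\alpha)$ to the triviality of $\theta(\alpha^{\tau}\alpha^{-1})$, observe that for $k$ in the right range $\alpha^{\tau}\alpha^{-1}$ lands in $G_{\beta}(O_F/\frak{p}^r)\cap G(\frak{p}^l/\frak{p}^r)$, where $\theta$ agrees with $\psi_{\beta}$, use Galois-invariance of the trace together with the tame different $\mathcal{D}(K/F)=\frak{p}_K^{e-1}$ to translate the condition into $\beta^{\tau^{-1}}-\beta\in\frak{p}_K^{\,k+1-e}$, and read the trichotomy off the lower ramification filtration. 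The two cosmetic differences from the paper are that (i) the paper quotes the groups $D_{-1}(K/F)=\text{\rm Gal}(K/F)$, $D_0(K/F)=\text{\rm Gal}(K/K_0)$, $D_1(K/F)=\{1\}$ directly, whereas you re-derive $v_K(\beta^{\sigma}-\beta)$ via the tame different identity $\sum_{\sigma\neq 1}v_K(\beta^{\sigma}-\beta)=e-1$; this is equivalent but a bit longer; and (ii) the paper simply restricts to $0\leq k\leq el^{\prime}$ and dispatches larger $k$ by monotonicity, rather than stretching the direct computation further.

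On that last point there is one genuine slip in your sketch, though it turns out to be harmless. The estimate $v_K(y^{\tau}-y)\geq v_K(y)+1$ for $\tau\in\text{\rm Gal}(K/K_0)$ is false in general: writing $y=u\varpi_K^m$ with $u\in O_K^{\times}$ and $\varpi_K^{\tau}=\zeta_{\tau}\varpi_K$, one gets $y^{\tau}-y\equiv u(\zeta_{\tau}^m-1)\varpi_K^m\pmod{\frak{p}_K^{m+1}}$, which has valuation exactly $m$ whenever $e\nmid m$. The ``harmless divisibility condition'' is therefore $e\mid v_K(y)$, and it cannot be arranged in the very case you invoke it for: to reach $k=el^{\prime}+1$ with $l^{\prime}=1$ you need $v_K(y)=el-1$, and $e\mid(el-1)$ forces $e=1$. (The companion claim that the $\alpha^{-1}$-correction still lands in $\frak{p}_K^{er}$ also breaks down there for $e\geq 2$.) None of this causes trouble for the proposition, because under the standing hypothesis $r\geq 2e$ the case $l^{\prime}=1$ forces $r\leq 3$ and hence $e=1$, so $\text{\rm Gal}(K/K_0)=\{1\}$ and the cases $k=e$ and $k>e$ coincide; the extension of the direct computation past $k=el^{\prime}$ is simply not needed. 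It would be cleaner to delete the ``$k=el^{\prime}+1$'' discussion entirely and argue, as the paper does, that $0\leq k\leq el^{\prime}$ together with monotonicity suffices once one notes that $el^{\prime}>e$ whenever $l^{\prime}\geq 2$, and that $l^{\prime}=1$ entails $e=1$.
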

\begin{proof}
We can assume that $0\leq k\leq el^{\prime}$. Then we have
$$
 (1+\varpi^r\varpi_K^{-k}x)^{\tau}(1+\varpi^r\varpi_K^{-k}x)^{-1}
 \equiv
 1+\varpi^r\left(\varpi_K^{-k\tau}x^{\tau}-\varpi_K^{-k}x\right)
 \npmod{\frak{p}_K^{er}}
$$
for all $x\in O_K$. So $\theta(\alpha^{\tau})=\theta(\alpha)$ for all 
$\alpha\in 1+\frak{p}_K^{er-k}$ if and only if
$$
 T_{K/F}(x\cdot\varpi_K^{-k}(\beta^{\tau}-\beta))\in O_F
$$
for all $x\in O_K$. This means that 
$\varpi_K^{-k}(\beta^{\tau}-\beta)
 \in\mathcal{D}(K/F)^{-1}=\frak{p}_K^{1-e}$. 
Here $\mathcal{D}(K/F)=\frak{p}_K^{e-1}$ is the different of $K/F$
because $K/F$ is tamely ramified. Since $O_K=O_F[\beta]$, the
condition is $\text{\rm ord}_K(x^{\tau}-x)\geq k-e+1$ for all 
$x\in O_K$ which means that
$$
 \tau\in\begin{cases}
         D_{-1}(K/F)=\text{\rm Gal}(K/F)&:k-e<0,\\
         D_0(K/F)=\text{\rm Gal}(K/K_0)&:k-e=0,\\
         D_1(K/F)=\{1\}                &:k-e>0
        \end{cases}
$$
where
$$
 D_t(K/F)=\{\sigma\in\text{\rm Gal}(K/F)\mid
             \text{\rm ord}_K(x^{\sigma}-x)\geq t+1\;
              \forall x\in O_K\}
$$
for $-1\leq t\in\Bbb R$ is the ramification groups of $K/F$. 
\end{proof}

In particular $\theta(\alpha^{\tau})=\theta(\alpha)$ for all 
$\alpha\in O_K^{\times}$ only if 
$\tau\in\text{\rm Gal}(K/F)$ is $1$, and hence the results of the
preceding subsection are applicable to our case. 

\begin{prop}\label{prop:a-theta-is-character-group-of-gal-k-over-f}
$\mathcal{A}_{\theta}$ is equal to the group of the character
  $\lambda$ of $W_{K/F}$ which is trivial on $K^{\times}$. In
  particular
\begin{equation}
 |A_{\varphi}|=|\mathcal{A}_{\theta}|
 =(O_K:N_{K/F}(O_K^{\times}))\cdot f.
\label{eq:order-of-centralizer-in-pgl}
\end{equation}
\end{prop}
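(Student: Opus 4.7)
The plan is to prove \eqref{eq:order-of-centralizer-in-pgl} in two parts: first, identify $\mathcal{A}_{\theta}$ with the characters of $W_{K/F}$ trivial on $K^{\times}$; second, verify the numerical identity via local class field theory. The equality $|A_{\varphi}|=|\mathcal{A}_{\theta}|$ follows directly from Proposition \ref{prop:centralizer-of-image-of-varphi-in-pgl(v)-general-case}.

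For the first identification, one inclusion is trivial: any character of $W_{K/F}$ trivial on $K^{\times}$ lies in $\mathcal{A}_{\theta}$ via the associated $\tau=1$. For the reverse, suppose $\lambda\in\mathcal{A}_{\theta}$ has associated $\tau$; I want to conclude $\tau=1$. Since $K^{\times}$ is normal abelian in $W_{K/F}$, the restriction $\lambda|_{K^{\times}}(\alpha)=\theta(\alpha^{\tau-1})$ must be $\text{\rm Gal}(K/F)$-invariant, which—as in the proof of Proposition \ref{prop:centralizer-of-image-of-varphi-in-pgl(v)-general-case}—amounts to $\theta(\alpha^{(\sigma-1)(\tau-1)})=1$ for all $\sigma\in\text{\rm Gal}(K/F)$ and $\alpha\in K^{\times}$. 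Setting $\sigma=\tau$ yields $\theta^{(\tau-1)^{2}}\equiv 1$ on $K^{\times}$.

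I then apply this to $\alpha=1+\varpi^{r}\varpi_{K}^{-k}x$ with $x\in O_K$ and $0\leq k\leq el^{\prime}$, following the template of the proof of Proposition \ref{prop:level-structure-of-theta}. The leading-order expansion of $\alpha^{(\tau-1)^{2}}$ together with the $\text{\rm Gal}(K/F)$-invariance of $T_{K/F}$ converts triviality into the pairing condition
\begin{equation*}
 T_{K/F}\bigl(\varpi_{K}^{-k}x(\tau^{-1}-1)^{2}\beta\bigr)\in O_{F}
 \quad\text{for all }x\in O_K,
\end{equation*}
forcing $(\tau^{-1}-1)^{2}\beta\in\frak{p}_{K}^{k-e+1}$ via $\mathcal{D}(K/F)^{-1}=\frak{p}_{K}^{1-e}$. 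Because $\tau$ acts semisimply on $K$ with eigenvalues the $d$-th roots of unity ($d=|\tau|$ coprime to $p$), one has $\text{\rm Ker}(\tau-1)^{2}=\text{\rm Ker}(\tau-1)=K^{\langle\tau\rangle}$; since $\beta$ generates $K/F$ and $K^{\langle\tau\rangle}\subsetneq K$ for $\tau\neq 1$, the element $(\tau^{-1}-1)^{2}\beta$ is non-zero with finite $K$-valuation, and taking $k$ at the top of the admissible range (feasible under $r\geq 2e$) yields the contradiction.

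The numerical identity uses local class field theory: $|F^{\times}/N_{K/F}(K^{\times})|=n$ by local reciprocity, and since $\nu_{F}(N_{K/F}(\varpi_{K}))=f\neq 0$, any norm from $K^{\times}$ of zero $F$-valuation is already the norm of a unit, whence $O_{F}^{\times}\cap N_{K/F}(K^{\times})=N_{K/F}(O_{K}^{\times})$. The short exact sequence $0\to O_{F}^{\times}/N_{K/F}(O_{K}^{\times})\to F^{\times}/N_{K/F}(K^{\times})\to\Bbb Z/f\Bbb Z\to 0$ then gives $(O_{F}^{\times}:N_{K/F}(O_{K}^{\times}))=e$, so $(O_{F}^{\times}:N_{K/F}(O_{K}^{\times}))\cdot f=ef=n=|\widehat{\text{\rm Gal}(K/F)}|=|\mathcal{A}_{\theta}|$. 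The main obstacle is the valuation analysis in the third paragraph: one must show that the finite valuation of $(\tau^{-1}-1)^{2}\beta$ is strictly less than $el^{\prime}-e+1$ uniformly in $\tau\neq 1$, which requires separate treatment of inertia vs.\ non-inertia elements and uses the Shintani-regularity of $\beta$.
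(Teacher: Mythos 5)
Two steps in your argument do not hold up. For the key implication ($\lambda\in\mathcal{A}_{\theta}\Rightarrow\tau=1$) you specialize the invariance relation to $\sigma=\tau$, keeping only $\theta^{(\tau-1)^{2}}\equiv 1$ on $K^{\times}$, and then try to contradict this by a valuation estimate on $(\tau^{-1}-1)^{2}\beta$. To find an admissible $k\le el^{\prime}$ violating the resulting condition $\mathrm{ord}_K\bigl((\tau^{-1}-1)^{2}\beta\bigr)\ge k-e+1$ you need $\mathrm{ord}_K\bigl((\tau^{-1}-1)^{2}\beta\bigr)\le e(l^{\prime}-1)$; you leave this unproved (your ``main obstacle''), and it is in fact unattainable in cases the proposition covers: for $e>1$ and $r\in\{2,3\}$ (so $l^{\prime}=1$) you would need $(\tau^{-1}-1)^{2}\beta$ to be a unit, whereas for any nontrivial inertia element $\tau$ one has $x^{\tau}\equiv x\npmod{\frak{p}_K}$ for all $x\in O_K$, so the second difference lies in $\frak{p}_K$. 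Even under $r\ge 2e$ the estimate remains a nontrivial unestablished claim. The paper avoids second differences entirely: instead of $\sigma=\tau$ it multiplies the invariance relation over all $\sigma\in\mathrm{Gal}(K/F)$, obtaining $\theta(\alpha^{\tau-1})^{n}=1$; since $n$ is prime to $p$, the character $\alpha\mapsto\theta(\alpha^{\tau-1})$ of $O_K^{\times}$ is trivial on the pro-$p$ group $1+\frak{p}_K$, and then Proposition \ref{prop:level-structure-of-theta} (with $er-k=1$, so $k>e$ or, if $e=1$, $k=e$) forces $\tau=1$ for every $r\ge 2$.

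Your final numerical step is wrong whenever $\mathrm{Gal}(K/F)$ is non-abelian, which the hypotheses allow ($K/F$ is only assumed Galois and tamely ramified). Local reciprocity gives $F^{\times}/N_{K/F}(K^{\times})\cong\mathrm{Gal}(K/F)^{\mathrm{ab}}$, of order $(K_1:F)$ where $K_1$ is the maximal abelian subextension, not $n$; the characters of $W_{K/F}$ trivial on $K^{\times}$ are the one-dimensional characters of $\mathrm{Gal}(K/F)$, of which there are $(K_1:F)$, not $n$; and $(O_F^{\times}:N_{K/F}(O_K^{\times}))=e(K_1/F)=(K_1:K_0)$, not $e$ in general. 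So your chain $(O_F^{\times}:N_{K/F}(O_K^{\times}))\cdot f=ef=n=|\widehat{\mathrm{Gal}(K/F)}|$ fails, even though the asserted identity itself is true; the correct count (as in the paper) runs through $K_1$: $|\mathcal{A}_{\theta}|=(K_1:F)=(K_1:K_0)\cdot f$, and since $N_{K/F}(O_K^{\times})=N_{K_1/F}(O_{K_1}^{\times})$ one gets $(O_F^{\times}:N_{K/F}(O_K^{\times}))=(K_1:K_0)$. The parts of your proposal that do work are the easy inclusion (characters trivial on $K^{\times}$ lie in $\mathcal{A}_{\theta}$ with $\tau=1$) and the identification $|A_{\varphi}|=|\mathcal{A}_{\theta}|$ via Proposition \ref{prop:centralizer-of-image-of-varphi-in-pgl(v)-general-case}.
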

\begin{proof}
Let $\lambda:W_{K/F}\to\Bbb C^{\times}$ be a group homomorphism such
that $\lambda(\alpha)=\theta(\alpha^{\tau-1})$ for all 
$\alpha\in K^{\times}$ with some $\tau\in\text{\rm Gal}(K/F)$. We have 
$\theta(\alpha^{\sigma(\tau-1)})=\theta(\alpha^{\tau-1})$ for all 
$\sigma\in\text{\rm Gal}(K/F)$ and $\alpha\in K^{\times}$, and hence
$$
 \theta(\alpha^{\tau-1})^n
 =\prod_{\sigma\in\text{\rm Gal}(K/F)}
   \theta(\alpha^{\sigma(\tau-1)})=1
$$
for all $\alpha\in K^{\times}$. This means that the group index 
$(O_K^{\times}:\text{\rm Ker}(\lambda|_{O_K^{\times}}))$ is finite and
a divisor of $n$, and hence prime to $p$. 
$1+\frak{p}_K^m\subset\text{\rm Ker}(\lambda|_{O_K^{\times}})$ with
some integer $m>0$. On the other hand we have 
$O_K^{\times}=\langle w\rangle\times(1+\frak{p}_K)$ 
with a primitive $q^f-1$-th root of unity $w\in K$ and 
$(1+\frak{p}_K:1+\frak{p}_K^m)=q^{f(m-1)}$ is a power of $p$. This
implies that $m=1$, that is $\theta(\alpha^{\tau})=\theta(\alpha)$ for
all $\alpha\in 1+\frak{p}_K$. Then $\tau=1$ by Proposition 
\ref{prop:level-structure-of-theta}. So $\lambda$ is trivial on
$K^{\times}$. Now we have 
$$
 |\mathcal{A}_{\theta}|=(K_1:F)=(K_1:K_0)\cdot f
$$
where $K_1$ is the maximal abelian subextension of $K/F$. On the other
hand we have $N_{K/F}(O_K^{\times})=N_{K_1/F}(O_{K_1}^{\times})$, and
hence we have
$$
 (O_F^{\times}:N_{K/F}(O_K^{\times})=e(K_1/F)=(K_1:K_0)
$$
because $K_0$ is the maximal unramified subextension of $K/F$.
\end{proof}
 
The image of $I_F\subset W_F$ under the canonical surjection
$$
 W_F\to W_F/\overline{[W_K,W_K]}=W_{K/F}
  \subset\text{\rm Gal}(K^{\text{\rm ab}}/F)
$$
is $\text{\rm Gal}(K^{\text{\rm ab}}/F^{\text{\rm ur}})$ which sits in
the group extension
$$
 1\to O_K^{\times}
  \xrightarrow{\delta_K}
  \text{\rm Gal}(K^{\text{\rm ab}}/F^{\text{\rm ur}})
  \xrightarrow{\text{\rm res.}}
  \text{\rm Gal}(K/K_0)\to 1.
$$
Let us denote by $K_k=K_{\varpi_K,k}$ ($k=1,2,\cdots$) the
field of $\varpi_K^k$-th division points of Lubin-Tate theory. 
Then we have an isomorphism 
$$
 \delta_K:1+\frak{p}_K^k\,\tilde{\to}\,
  \text{\rm Gal}(K^{\text{\rm ab}}/K_kK^{\text{\rm ur}}).
$$
Because the character $\theta:K^{\times}\to\Bbb C^{\times}$ comes from
a character of 
$$
 G_{\beta}(O_F/\frak{p}^r)\subset\left(O_K/\frak{p}_K^{er}\right)^{\times},
$$
$\Theta$ is trivial on 
$\text{\rm Gal}(K^{\text{\rm ab}}/K_{er}K^{\text{\rm ur}})$. 
Note that 
$$
 K_{er}K^{\text{\rm ur}}=K_{er}F^{\text{\rm ur}}
$$
is a finite extension of $F^{\text{\rm ur}}$. Let us use the upper
numbering 
$$
 D^s=D_t(K_{er}F^{\text{\rm ur}}/F^{\text{\rm ur}})
$$
of the higher ramification group, where $t\mapsto s$
is the inverse of Hasse function whose graph is
\begin{center}
\unitlength 0.1in
\begin{picture}( 52.8000, 29.6500)( 14.0000,-42.1000)
%
\special{pn 8}%
\special{pa 2000 3610}%
\special{pa 2610 3210}%
\special{fp}%
%
\special{pn 8}%
\special{pa 2620 3190}%
\special{pa 3610 2800}%
\special{fp}%
\special{pa 3610 2800}%
\special{pa 3610 2800}%
\special{fp}%
%
\special{pn 8}%
\special{pa 3610 2800}%
\special{pa 5390 2400}%
\special{fp}%
%
\special{pn 8}%
\special{pa 2010 3610}%
\special{pa 1610 3990}%
\special{fp}%
%
\special{pn 8}%
\special{pa 1610 3990}%
\special{pa 1610 3610}%
\special{dt 0.045}%
%
\special{pn 8}%
\special{pa 1610 3980}%
\special{pa 2010 3970}%
\special{dt 0.045}%
%
\special{pn 8}%
\special{pa 2620 3200}%
\special{pa 2620 3600}%
\special{dt 0.045}%
%
\special{pn 8}%
\special{pa 2620 3210}%
\special{pa 2010 3200}%
\special{dt 0.045}%
%
\special{pn 8}%
\special{pa 2010 2800}%
\special{pa 3620 2800}%
\special{dt 0.045}%
\special{pa 3620 3610}%
\special{pa 3620 3610}%
\special{dt 0.045}%
%
\special{pn 8}%
\special{pa 2010 2410}%
\special{pa 5400 2410}%
\special{dt 0.045}%
\special{pa 5330 2410}%
\special{pa 5330 3610}%
\special{dt 0.045}%
\special{pa 5330 3620}%
\special{pa 5330 3580}%
\special{dt 0.045}%
%
\special{pn 8}%
\special{pa 2000 4210}%
\special{pa 2000 1470}%
\special{fp}%
\special{sh 1}%
\special{pa 2000 1470}%
\special{pa 1980 1538}%
\special{pa 2000 1524}%
\special{pa 2020 1538}%
\special{pa 2000 1470}%
\special{fp}%
\put(18.8000,-32.0000){\makebox(0,0){1}}%
\put(18.8000,-28.0000){\makebox(0,0){2}}%
\put(18.8000,-24.1000){\makebox(0,0){3}}%
\put(21.5000,-39.7000){\makebox(0,0){-1}}%
\put(16.1000,-34.8000){\makebox(0,0){-1}}%
\put(26.2000,-37.7000){\makebox(0,0){$q^f-1$}}%
\put(35.9000,-37.9000){\makebox(0,0){$q^{2f}-1$}}%
\put(53.3000,-37.8000){\makebox(0,0){$q^{3f}-1$}}%
\put(20.0000,-13.1000){\makebox(0,0){$s$}}%
%
\special{pn 8}%
\special{pa 1400 3610}%
\special{pa 6590 3610}%
\special{fp}%
\special{sh 1}%
\special{pa 6590 3610}%
\special{pa 6524 3590}%
\special{pa 6538 3610}%
\special{pa 6524 3630}%
\special{pa 6590 3610}%
\special{fp}%
%
\special{pn 8}%
\special{pa 5330 2420}%
\special{pa 6460 2270}%
\special{fp}%
\special{pa 6460 2270}%
\special{pa 6450 2250}%
\special{fp}%
\put(67.0000,-36.1000){\makebox(0,0){t}}%
%
\special{pn 8}%
\special{pa 3570 2800}%
\special{pa 3570 3610}%
\special{dt 0.045}%
\special{pa 3570 3610}%
\special{pa 3570 3610}%
\special{dt 0.045}%
\end{picture}%

\end{center}
Then $\delta_K$ induces the isomorphism
$$
 (1+\frak{p}_K^k)/(1+\frak{p}_K^{er})\,\tilde{\to}\,
 \text{\rm Gal}(K_{er}K^{\text{\rm ur}}/K_kK^{\text{\rm ur}})
 =D^s
$$
for $k-1<s\leq k$ ($k=1,2,\cdots$), and hence
$$
 |D_t|=\begin{cases}
        e\cdot q^{nr}(1-q^{-f})&:t=0,\\
        q^{nr-fk}              &:q^{f(k-1)}-1<t\leq q^{fk}-1.
       \end{cases}
$$
The explicit actions 
\eqref{eq:explicit-action-of-theta-alpha} and 
\eqref{eq:explicit-action-of-theta-tau-0} and Proposition 
\ref{prop:level-structure-of-theta} shows that 
the space of $\text{\rm Ad}\circ\Theta(D_t)$-fixed
vectors in $\widehat{\frak g}$ is
$$
 \left\{\begin{bmatrix}
         a_11_e&      &      &      \\
               &a_21_e&      &      \\
               &      &\ddots&      \\
               &      &      &a_f1_e
        \end{bmatrix}\biggm|
           \begin{array}{l}
            a_i\in\Bbb C,\\
            a_1+a_2+\cdots+a_f=0
           \end{array}\right\}
$$
if $t=0$, 
$$
   \left\{\begin{bmatrix}
           a_1&   &      &   \\
              &a_2&      &   \\
              &   &\ddots&   \\
              &   &      &a_n
          \end{bmatrix}\biggm|
           \begin{array}{l}
            a_i\in\Bbb C,\\
            a_1+a_2+\cdots+a_n=0
           \end{array}\right\}
$$
if $0<t\leq q^{f\{e(r-1)-1\}}-1$, 
$$
    \left\{\begin{bmatrix}
           A_1&   &      &   \\
              &A_2&      &   \\
              &   &\ddots&   \\
              &   &      &A_f
          \end{bmatrix}\biggm|
           \begin{array}{l}
            A_i\in M_e(\Bbb C),\\
            \text{\rm tr}(A_1+A_2+\cdots+A_n)=0
           \end{array}\right\}
$$
if $q^{f\{e(r-1)-1\}}-1<t\leq q^{fe(r-1)}-1$ and 
$\widehat{\frak g}$ if $q^{fe(r-1)}-1<t\leq q^{fer}-1$. So we have
$$
 \dim_{\Bbb C}{\widehat{\frak g}}^{D_t}
  =\begin{cases}
   f-1   &:t=0,\\
   n-1   &:0<t\leq q^{f\{e(r-1)-1\}}-1,\\
   fe^2-1&:q^{f\{e(r-1)-1\}}-1<t\leq q^{fe(r-1)}-1,\\
   n^2-1 &:q^{fe(r-1)}-1<t\leq q^{fer}-1
  \end{cases}.
$$
Hence we have
\begin{align*}
 \sum_{t=0}^{\infty}&(D_0:D_t)^{-1}
       \dim_{\Bbb C}\left(\widehat{\frak g}/{\widehat{\frak g}}^{D_t}
                          \right)\\
 &=n^2-f+(n^2-n)\cdot\frac 1e\cdot\{e(r-1)-1\}+(n^2-fe^2)\cdot\frac 1e\\
 &=rn(n-1).
\end{align*}
Combined with
\eqref{eq:ratio-of-l-function-of-tamely-induced-rep-of-weil-group}, we
have
\begin{equation}
 \gamma(\varpi,\text{\rm Ad},0)
 =q^{rn(n-1)/2}\cdot f\cdot\frac{1-q^{-n}}
                                {1-q^{-f}}.
\label{eq:explicit-value-of-gamma-factor}
\end{equation}
The equations 
\eqref{eq:gamma-factor-of-principal-parametor}, 
\eqref{eq:order-of-centralizer-in-pgl} and
\eqref{eq:explicit-value-of-gamma-factor} prove Theorem 
\ref{th:ratio-of-gamma-factor}. 


Sendai 980-0845, Japan\\
Miyagi University of Education\\
Department of Mathematics
\end{document}